\newtheorem{theorem}{Theorem}[section]
\newtheorem{proposition}[theorem]{Proposition}
\newtheorem{lemma}[theorem]{Lemma}
\theoremstyle{definition}
\newtheorem{exam}[theorem]{Example}
\theoremstyle{remark}
\newtheorem{remark}[theorem]{Remark}
\numberwithin{equation}{section} 
\DeclareMathOperator{\Sing}{Sing} 
 \DeclareMathOperator{\Nm}{Nm}
\DeclareMathOperator{\J}{J} \DeclareMathOperator{\Ker}{Ker}
\DeclareMathOperator{\Supp}{Supp}
\DeclareMathOperator{\Cliff}{Cliff} \DeclareMathOperator{\Pic}{Pic}
\begin{document}

\title[A Note on Projectivized Tangent Cone Quadrics ...]{
A Note
 on Projectivized Tangent Cone Quadrics of Rank $\leq 4$ in the Ideal of a Prym-Canonical Curve
 }

\author[Ali Bajravani]{Ali Bajravani}
\address[]{Department of Mathematics, Azarbaijan Shahid Madani University, Tabriz, I. R. Iran.\\
P. O. Box: 53751-71379}

\email{bajravani@azaruniv.edu}

\maketitle

\begin{abstract}
Throughout the paper, among other results, we give in theorem
\ref{theorem3} and proposition \ref{prepro1} a partial analogue of
theorem \ref{thm0} for projectivized tangent cone quadrics of rank
equal or less than $4$, for Prymians. During the lines of the paper
it would be seen that for an un-ramified double covering of a
general smooth tetragonal curve $X$ induced by a line bundle $\eta$
on $X$ with $\eta^2=0$, the Prym-canonical model of $X$
 is projectively normal in $\mathbf{P}(H^{0}(K_{X}\cdot\eta))$.
 Then we consider a genus $g=7$,
tetragonal curve $C$ which is birationally isomorphic to a plane
sextic curve $X$ with ordinary singularities. As byproduct of
theorem \ref{theorem3} and proposition \ref{prepro1}, we show that
the stable projectivized tangent cone quadrics with rank equal or
less than $4$ of an un-ramified double covering of $C$, generate the
space of quadrics in $\mathbf{P}(H^{0}(K_{C}\cdot\eta))$ containing
$K_{C}\cdot\eta$-model of
 $C$, where $\eta$ is a line bundle on $C$ with $\eta^2=0$, obtained in section $4$.\\
\\
\\
\textbf{Keywords:}  Clifford Index; Projectivized Tangent Cone;
Prym-Canonical Curve; Prym variety; Tetragonal Curve.
\\
\\
\textbf{MSC(2010):}  14H99; 14C20; 14H50; 14H40; 14H51.
\end{abstract}
\section{Introduction}

For an $\acute{e}$tale double covering $\pi:\tilde{X}\rightarrow X$
of smooth curves, it is naturally associated a principally polarized
abelian variety the so called Prym variety of $\pi$, which is
denoted by $\mathbb{P}(\pi)$, whose principal polarization is
induced twice by $\Theta_{\tilde{X}}$, the theta divisor of
$\tilde{X}$. While this P.P.A.V. enjoys from some interesting
properties analogous to Jacobians, it behaves differently in some
another properties. Usually in most cases these differences lead to
a rich geometry which provides wide areas of research. For example,
although the theory of Prym varieties is old enough and has been
studied variously by many well known mathematicians since decades
ago, but surprisingly an analogue of the well known Riemann
singularity theorem for Prymians has been given relatively lately,
by R. Smith and R. Varley in \cite{S-V1} and its complete analogue
has given recently by S. C. Martin in \cite{S-C-M}.

 Another useful and nice package in the
land of Jacobians of canonical curves, is the well known theorem
\ref{thm0}, proved by Andreotti-Mayer in \cite{A-M} and by G. Kempf
in \cite{K}.

\begin{theorem}{\label{thm0}
Let $X$ be a smooth projective curve of genus $g$ on an
algebraically closed field of characteristic zero and $\mid
D\mid=g^{1}_{g-1}$ a complete linear series of degree $g-1$ and
dimension $1$ on $X$. Consider the corresponding double point of
$\Theta_{X}$:
 $$\mathcal{O}_{X}(D)=\mathcal{O}_{X}(K_{X}-D)\in
\Theta_{sing}.$$
 Then the projectivized tangent cone to $\Theta_{X}$
at $\mathcal{O}_{X}(D)$ is a quadric of rank at least or equal to
$4$ containing the canonical model of $X$ which can be described as
the union of the linear span of divisors in $\mid
D\mid=g^{1}_{g-1}$. Moreover the quadric is of rank $3$ precisely
when $\mid 2D\mid=\mid K_{X}\mid$.\\
Conversely a quadric $Q$ of rank less than or equal to $4$, through
$X$ is a tangent cone to $\Theta_{X}$ if one of its rulings cuts out
a complete linear series of degree $g-1$ and dimension $1$ on $X$.

}
\end{theorem}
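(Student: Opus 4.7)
The plan is to use Kempf's cohomological description of the tangent cone at a double point of the theta divisor. Since $h^{0}(\mathcal{O}_{X}(D))=2$, Riemann's singularity theorem gives $\mathrm{mult}_{\mathcal{O}_{X}(D)}\Theta_{X}=2$, so the projectivized tangent cone will be a quadric in $\mathbf{P}(T_{\mathcal{O}_{X}(D)}\Pic^{g-1}(X))=\mathbf{P}(H^{1}(X,\mathcal{O}_{X}))\cong\mathbf{P}(H^{0}(K_{X})^{*})$ -- exactly the ambient projective space of the canonical embedding. I would fix bases $s_{0},s_{1}$ of $H^{0}(\mathcal{O}_{X}(D))$ and $t_{0},t_{1}$ of $H^{0}(K_{X}-D)$ (both of dimension $2$ by Riemann--Roch) and apply Kempf's formula to identify this cone with
\[
Q=\det\!\begin{pmatrix} s_{0}t_{0} & s_{0}t_{1} \\ s_{1}t_{0} & s_{1}t_{1} \end{pmatrix},
\]
where each entry $s_{i}t_{j}\in H^{0}(K_{X})$ is read as a linear form on $H^{0}(K_{X})^{*}$; the rank bound $\leq 4$ is then immediate from the $2\times 2$ determinantal shape.

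For $X\subset Q$, at a point $p\in X$ the evaluation matrix $(s_{i}(p)\,t_{j}(p))$ is the outer product of $(s_{0}(p),s_{1}(p))$ and $(t_{0}(p),t_{1}(p))$, hence has rank $\leq 1$ and zero determinant. The geometric description as $\bigcup_{E\in|D|}\langle E\rangle$ will come from intersecting $Q$ with each span $\Lambda_{E}:=\langle E\rangle$, which by geometric Riemann--Roch has dimension $g-3$ (since $h^{0}(K_{X}-E)=2$). Choosing $s=\alpha s_{0}+\beta s_{1}$ with $(s)=E$, one has $H^{0}(K_{X}-E)=s\cdot H^{0}(K_{X}-D)$, spanned by $st_{0},st_{1}$, so $\Lambda_{E}=\{v:\alpha(s_{0}t_{j})(v)+\beta(s_{1}t_{j})(v)=0,\ j=0,1\}$, i.e.\ the locus where the row vector $(\alpha,\beta)$ lies in the left kernel of $(s_{i}t_{j}(v))$. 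A point $v$ admits such a nonzero $(\alpha,\beta)$ iff this matrix has rank $\leq 1$, iff $Q(v)=0$; letting $(\alpha:\beta)$ range over $|D|\cong \mathbf{P}^{1}$ yields $Q=\bigcup_{E\in|D|}\Lambda_{E}$, and symmetrically the dual ruling $\{\Lambda_{F}:F\in|K_{X}-D|\}$.

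The rank-$3$ clause would then follow: the two rulings coincide exactly when $|D|=|K_{X}-D|$, i.e.\ $2D\sim K_{X}$, in which case I would take $t_{i}=s_{i}$, so that the matrix becomes symmetric and $Q$ takes the form $XZ-Y^{2}$ in the linear forms $X=s_{0}^{2},\ Y=s_{0}s_{1},\ Z=s_{1}^{2}\in H^{0}(K_{X})$ -- manifestly of rank $3$ as soon as those three forms are linearly independent. For the converse direction, given a rank-$\leq 4$ quadric $Q$ through $X$ with one ruling cutting out a complete $g_{g-1}^{1}$, say $|D|$, the family $\{\Lambda_{E}:E\in|D|\}$ reverse-engineers $Q$ as the $2\times 2$ determinantal quadric attached to bases of $H^{0}(D)$ and $H^{0}(K_{X}-D)$, identifying it with the Kempf tangent cone at $\mathcal{O}_{X}(D)$.

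The principal obstacle I anticipate is the initial appeal to Kempf's determinantal formula: showing that the leading term of the local equation of $\Theta_{X}$ at $\mathcal{O}_{X}(D)$ really is the above $2\times 2$ determinant is the substantive input, requiring Kempf's cohomological resolution of the theta divisor, on which the rest of the argument is essentially linear algebra. A secondary technical subtlety is guaranteeing that the multiplication $H^{0}(D)\otimes H^{0}(K_{X}-D)\to H^{0}(K_{X})$ is injective, so that $Q$ genuinely attains the predicted rank rather than degenerating further.
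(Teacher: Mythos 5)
The paper does not actually prove Theorem \ref{thm0}: it is quoted as a classical result of Andreotti--Mayer and Kempf, so there is no in-paper argument to compare yours against. Your proposal is the standard proof of that classical theorem and is correct in outline: Kempf's singularity theorem identifies the projectivized tangent cone at the double point $\mathcal{O}_X(D)$ with the determinant of the $2\times 2$ matrix $(s_it_j)$ built from the Petri multiplication $H^0(D)\otimes H^0(K_X-D)\to H^0(K_X)$, which gives rank $\le 4$ at once, containment of the canonical curve (the evaluation matrix at a point is an outer product, hence of rank $\le 1$), and the two rulings $\{\langle E\rangle\}_{E\in|D|}$ and $\{\langle F\rangle\}_{F\in|K_X-D|}$ via the left and right kernels, exactly as you describe. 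Two points should be made explicit to close the argument. First, the ``precisely when'' clause needs both directions: if $2D\sim K_X$ the symmetric choice $t_i=s_i$ gives $s_0^2\cdot s_1^2-(s_0s_1)^2$, of rank exactly $3$ because $s_0^2,\,s_0s_1,\,s_1^2$ are linearly independent in $H^0(K_X)$ (a linear relation among them would factor over the algebraically closed base field into a product of two sections of $\mathcal{O}_X(D)$, one of which would have to vanish identically); conversely, rank $3$ forces the quadric to carry a single family of maximal linear subspaces, so the two rulings coincide and $|D|=|K_X-D|$; and the rank cannot drop to $\le 2$ since an irreducible nondegenerate curve cannot lie in a union of two hyperplanes. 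Second, in the converse direction you should justify the ``reverse-engineering'': completeness of the $g^1_{g-1}$ plus geometric Riemann--Roch gives $\dim\langle E\rangle=g-3$, which is exactly the dimension of a ruling plane of a rank-$\le 4$ quadric in $\mathbf{P}^{g-1}$, so each ruling plane equals the span of the divisor it cuts; then $Q$ and the tangent cone at $\mathcal{O}_X(D)$ are two quadric hypersurfaces with the same support, hence proportional. With these details supplied, your argument is the complete classical proof.
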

Although it is completely known in the literature that the projectivized tangent cone
at a double point a of Prym-Theta divisor  
of general Prym-Canonical curves is a quadric of rank $6$ rather 
than rank $4$, but it might be interesting to know:

\begin{itemize}
\item[$\bullet$]How is the effect of linear subspaces of a Prym quadric
tangent cone on $C$, when the quadric is of rank $4$ containing $C$?
\end{itemize}

Equivalently we look for an analogue of theorem \ref{thm0} for
Prymians. The genus $g=7$ case, the first case where the singular
locus of prym theta divisor is nonempty, is the first case that has
to be dealt with. We see that a projectivized tangent cone quadric
of rank equal or less than $4$ at a stable singularity of Prym-theta
divisor of an $\acute{e}$tale double covering $\tilde{X}\rightarrow
X$, through the Prym-canonical model of $X$, imposes a linear series
of degree $d$ such that $d\in \{g-3, g-2, g-1\}$. If $d\in \{g-2,
g-1\}$ then for each $g\geq 7$ the linear series is complete, while
it would be complete for $d=g-3$ too provided that $g=7$. A partial
converse to this result will be given in Proposition
\ref{prepro1}.\\
Then we proceed to provide an evidence for the above question.
Precisely we give an example of a curve admitting 
projectivized tangent cone quadrics of rank equal or less than $4$.
We would like to give an example, on which
not only a complete converse of theorem \ref{theorem3} is
valid, but also its rank $4$
Prym quadric
tangent cones generate the space of quadrics containing Prym-canonical model of $C$.
 A curve which is birationally equivalent to a plane sextic
with three number of ordinary singularities, might seem a candidate
for this aim. But because of technical reasons a curve of this type
with three collinear ordinary singularities is needed. Whereas the
existence of a plane sextic curve with three number of ordinary
double points is a well known fact, the existence of a plane sextic
curve with three number of collinear ordinary singularities needs an
actual proof. Such a proof will be given in Theorem \ref{thm1}.

H. Lange and E. Sernesi in \cite{L-S} have proved that any
Prym-canonical line bundle on a curve of Clifford index$\geq 3$ is
globally generated and very ample and moreover its Prym-canonical
model is projectively normal in the projective space of the
Prym-canonical differential forms. When this is no longer true for
an arbitrary tetragonal curve, it would be verified not only for
Prym-canonical model of the example, obtained in Theorem \ref{thm1},
in the projective space of its Prym-canonical differential forms.
but also for Prym-canonical model of general tetragonal curves in
their projective space of their Prym-canonical differential forms. See Lemma \ref{lem1}.

\vspace{-.45cm}
\section{Preliminaries and Notations}

A nontrivial line bundle $\eta$ on an irreducible nonsingular
projective curve $X$ with $\eta^{2}=0$ gives rise to a double
covering $\pi: \tilde{X}\rightarrow X$ and vice versa. For a
nontrivial line bundle $\eta$ with $\eta^{2}=0$ we denote by
$\pi_{\eta}$ the map induced by $\eta$. The kernel of the norm map
of $\pi_{\eta}$ denoted by $\Nm(\pi_{\eta})$, which is a subset of
$\J(\tilde{X})$, turns to be the union of two irreducible isomorphic
components one of them containing zero. The component containing
zero, denoted by $\mathbb{P}(\pi_{\eta})$, is called the Prym
variety of the double covering $\pi_{\eta}$ and  consists of line
bundles $\tilde{\mathcal{H}}$ on $\tilde{X}$, with
$\tilde{\mathcal{H}}\in \Ker(\Nm)$ and $h^{0}(\tilde{\mathcal{H}})$
is an even number. The other one which is denoted by $Z_{1}$,
 consists of line bundles
$\tilde{\mathcal{H}}$ on $\tilde{X}$, with $\tilde{\mathcal{H}}\in
\Ker(\Nm)$ and $h^{0}(\tilde{\mathcal{H}})$ is an odd number.

The theta divisor $\Theta_{\tilde{X}}$ induces a principal
polarization on $\mathbb{P}(\pi_{\eta})$ and the Prym variety
$\mathbb{P}(\pi_{\eta})$ turns to be a principally polarized abelian
variety with principal polarization $\mathbb{E}(\pi_{\eta})$. In
terms of dimensions of global sections of the points in
$\J(\tilde{X})$, the principally polarized abelian variety
$\mathbb{P}(\pi_{\eta})$ can be described as follows:
$$\mathbb{P}(\pi_{\eta})=\{\tilde{\mathcal{L}}\in\J(\tilde{X})\mid\Nm(\tilde{\mathcal{L}})=K_{X}
\!\!\!\!\quad,\!\!\!\!\quad
 h^{0}(\tilde{\mathcal{L}})\equiv 0 \quad \!\!\!\! (mod \quad \!\!\!\! 2)\}.$$
The singular locus of $\mathbb{E}(\pi_{\eta})$ has a similar
description in these terms too:
$$\begin{array}{ccc}
\Sing(\mathbb{E}(\pi_{\eta}))&=&\!\!\!\!\!\!\!\!\!\!\!\!\!\!\!\!\!\!\!\!\!\!\!\!\!\!\!\!\!\!\!\!\!\!\!
\!\!\!\!\!\!\!\!\!\!\!\!\!\!\!\!\!\!\!\!\!\!\!\!\!\!\!\!\!\!\!
\{\tilde{\mathcal{L}}\in\mathbb{P}(\pi_{\eta})\mid
h^{0}(\tilde{\mathcal{L}})\geq 4\}\\
&\mathbf{\cup}& \{\tilde{\mathcal{L}}\in\mathbb{P}(\pi_{\eta})\mid
h^{0}(\tilde{\mathcal{L}})=2 \!\!\!\!\quad,\!\!\!\!\!\quad
T_{\tilde{\mathcal{L}}}(\mathbb{P}(\pi_{\eta}))\subset
TC_{\tilde{\mathcal{L}}}(\Theta_{\tilde{X}})\}
\end{array}$$
 where $TC_{\tilde{\mathcal{L}}}(\Theta_{\tilde{X}})$ denotes the tangent cone of
$\Theta_{\tilde{X}}$ at $\tilde{\mathcal{L}}$. The singular points
of $\mathbb{E}(\pi_{\eta})$ with $h^{0}(\tilde{\mathcal{L}})\geq 4$
 are called stable singularities and singularities belonging to the
 second set
are called exceptional ones. For standard notations and details of
the subject, see  of \cite[Chapter $14$]{C-L}.

If $\eta$ is a nontrivial line bundle on $X$ such that $\eta^{2}=0$,
then the line bundle $K_{X}\cdot\eta$ is called a Prym-canonical
line bundle on $X$, if $K_{X}\cdot\eta$ is globally generated and
very ample, the irreducible (possibly singular) curve
$\phi_{K_{X}\cdot\eta}(X)$ is a Prym-canonical curve, where
$$\phi_{K_{X}\cdot\eta}:X\rightarrow
\mathbf{P}(H^{0}(K_{X}\cdot\eta))$$ is the morphism defined by
global sections of $K_{X}\cdot\eta$. We will denote the curve
$\phi_{K_{X}\cdot\eta}(X)$ by $X_{\eta}$. A linear series
$g^{r}_{d}$ on $X$ gives rise to a same linear series on $X_{\eta}$
via $\phi_{K_{X}\cdot\eta}$ and vice versa. In the absence of any
confusion, we use a same symbol for both of these linear series on
$X$ or on $X_{\eta}$.

\begin{theorem}{\label{Martin thm}
Let $\pi:\tilde{X}\rightarrow X$ be an $\acute{e}$tale double
covering induced by a line bundle $\eta$ such that $\eta^{2}=0$.
Assume moreover that the line bundle $K_{X}\cdot\eta$ is very ample
and globally generated. Then the projectivized tangent cone of
$\mathbb{E}(\pi_{\eta})$ at a double point $\tilde{\mathcal{L}}$ is
a quadric hypersurface in $\mathbf{P}(H^{0}(K_{X}\cdot\eta))$
containing $X_{\eta}$ if and only if $\tilde{\mathcal{L}}$ is a
stable singularity with $h^{0}(\tilde{\mathcal{L}})=4$.
 }
\end{theorem}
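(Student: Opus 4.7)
My approach hinges on the Prym analogue of the Riemann--Kempf singularity theorem, established by Smith--Varley \cite{S-V1} and completed in full generality by Casalaina--Martin \cite{S-C-M}: at a stable singularity $\tilde{\mathcal{L}}$ of $\mathbb{E}(\pi_\eta)$ with $h^{0}(\tilde{\mathcal{L}})=2k$, one has $\mathrm{mult}_{\tilde{\mathcal{L}}}\,\mathbb{E}(\pi_\eta)=k$ and the projectivized tangent cone is a Pfaffian hypersurface of degree $k$ in $\mathbf{P}(H^{0}(K_X\cdot\eta))$, arising as a ``square-root'' of the $\sigma$-antiinvariant reduction of Kempf's determinantal description of $TC_{\tilde{\mathcal{L}}}\Theta_{\tilde X}$. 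The ambient projective space is identified via the involution decomposition $H^{0}(K_{\tilde X})=H^{0}(\pi^{*}K_X)=H^{0}(K_X)\oplus H^{0}(K_X\cdot\eta)$, the $(-1)$-eigenspace $H^{0}(K_X\cdot\eta)$ being both the cotangent space to $\mathbb{P}(\pi_\eta)$ and the ambient of $X_\eta$.

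For the ``if'' direction, assume $\tilde{\mathcal{L}}$ is stable with $h^{0}(\tilde{\mathcal{L}})=4$; then Casalaina--Martin gives $\mathrm{mult}_{\tilde{\mathcal{L}}}\,\mathbb{E}(\pi_\eta)=2$, so the projectivized tangent cone is a quadric hypersurface in $\mathbf{P}(H^{0}(K_X\cdot\eta))$. Concretely, fix a basis $s_0,\ldots,s_3$ of $H^{0}(\tilde{\mathcal{L}})$; the Prym condition $\sigma^{*}\tilde{\mathcal{L}}\cong K_{\tilde X}\otimes\tilde{\mathcal{L}}^{-1}$ yields a corresponding basis $\sigma^{*}s_0,\ldots,\sigma^{*}s_3$ of $H^{0}(K_{\tilde X}\otimes\tilde{\mathcal{L}}^{-1})$, and the multiplication map
\[
H^{0}(\tilde{\mathcal{L}})\otimes H^{0}(K_{\tilde X}\otimes\tilde{\mathcal{L}}^{-1})\longrightarrow H^{0}(K_{\tilde X})
\]
produces a $4\times 4$ matrix $M$ with $M_{ij}=s_i\cdot\sigma^{*}s_j$. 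A short computation $\sigma^{*}M_{ij}=M_{ji}$ shows that the antiinvariant part $M^{-}$ has entries in $H^{0}(K_X\cdot\eta)$ and is skew-symmetric, so its Pfaffian is a well-defined quadratic form on $H^{0}(K_X\cdot\eta)^{*}$ cutting out the tangent cone. To verify that $X_\eta$ lies on this quadric, evaluate $M^{-}$ at a lift $\tilde p\in\pi^{-1}(p)$: the entries specialize (up to a trivialization of the fiber of $K_X\cdot\eta$ at $p$) to $u_i v_j-u_j v_i$ with $u=(s_i(\tilde p))_i$ and $v=(s_i(\sigma\tilde p))_i$, so $M^{-}(\tilde p)=u\wedge v$ is a decomposable $2$-vector in $\bigwedge^{2}\mathbb{C}^{4}$ and its Pfaffian automatically vanishes; hence $\phi_{K_X\cdot\eta}(p)$ lies on the quadric for every $p\in X$.

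For the ``only if'' direction, suppose the projectivized tangent cone at the double point $\tilde{\mathcal{L}}$ is a quadric hypersurface in $\mathbf{P}(H^{0}(K_X\cdot\eta))$ containing $X_\eta$. The condition $\mathrm{mult}_{\tilde{\mathcal{L}}}\,\mathbb{E}(\pi_\eta)=2$ combined with the Casalaina--Martin relation $\mathrm{mult}=h^{0}/2$ for stable singularities forces $h^{0}(\tilde{\mathcal{L}})=4$ whenever $\tilde{\mathcal{L}}$ is stable. To rule out the exceptional case, recall that such a singularity has $h^{0}(\tilde{\mathcal{L}})=2$ and $T_{\tilde{\mathcal{L}}}\mathbb{P}(\pi_\eta)\subset TC_{\tilde{\mathcal{L}}}\Theta_{\tilde X}$; writing the theta function near $\tilde{\mathcal{L}}$ as $f=f_2+f_3+\cdots$, the inclusion forces $f_2\vert_{T_{\tilde{\mathcal{L}}}\mathbb{P}(\pi_\eta)}\equiv0$, and using $\Theta_{\tilde X}\vert_{\mathbb{P}(\pi_\eta)}=2\mathbb{E}(\pi_\eta)$ one extracts a square root of the quartic restriction $f_4\vert_{T_{\tilde{\mathcal{L}}}\mathbb{P}(\pi_\eta)}=g_2^{\,2}$ to obtain the tangent-cone quadric $V(g_2)$. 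A $4$-jet analysis shows $g_2$ has very small rank (its defining form lies in the radical locus inherited from the vanishing of $f_2$), producing a degenerate quadric; since $X_\eta$ is non-degenerate in $\mathbf{P}(H^{0}(K_X\cdot\eta))$ under the Prym-canonical embedding, it cannot be contained in this degenerate quadric, contradicting the hypothesis. Hence $\tilde{\mathcal{L}}$ must be stable with $h^{0}(\tilde{\mathcal{L}})=4$.

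The main obstacle is the $\sigma$-equivariant Pfaffian descent: verifying the skew-symmetry of $M^{-}$ and cleanly descending the Pfaffian from $\bigwedge^{2}H^{0}(\tilde{\mathcal{L}})$ into $\mathrm{Sym}^{2}H^{0}(K_X\cdot\eta)$, after which the decomposable-$2$-vector evaluation argument delivers $X_\eta$-containment almost for free. The exceptional-case exclusion is the other delicate step: the square-root extraction from the quartic theta-restriction and the proof that the resulting quadric is degenerate enough to force non-containment of the non-degenerate $X_\eta$ requires a careful comparison of the $\sigma$-eigenstructure on the $2$- and $4$-jets of the theta function along the Prym tangent space.
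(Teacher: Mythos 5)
Your ``if'' direction is fine and is in fact a more explicit route than the paper takes: where the paper simply invokes \cite[Corollary $6.2.5$]{S-C-M} to get the containment $X_{\eta}\subset \mathbb{P}TC_{\tilde{\mathcal{L}}}(\mathbb{E}(\pi_{\eta}))$ from $h^{0}(\tilde{\mathcal{L}})\geq 4$, you unpack the Smith--Varley Pfaffian description of the tangent cone and check the vanishing on $X_{\eta}$ by the decomposable $2$-vector evaluation $M^{-}(\tilde p)=u\wedge v$. That computation is correct (granting, as you do by citation, that the Pfaffian of $M^{-}$ really is the tangent cone of $\mathbb{E}(\pi_{\eta})$ and not merely some quadric through $X_{\eta}$ --- that identification is the hard content of \cite{S-V1} and \cite{S-C-M}). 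Likewise your treatment of the stable case in the ``only if'' direction, via $\operatorname{mult}_{\tilde{\mathcal{L}}}\mathbb{E}(\pi_{\eta})=\tfrac{1}{2}h^{0}(\tilde{\mathcal{L}})$, matches the paper's degree count.

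The genuine gap is in your exclusion of exceptional double points. You argue that the quadric $V(g_{2})$ extracted from $f_{4}\vert_{T_{\tilde{\mathcal{L}}}\mathbb{P}(\pi_{\eta})}=g_{2}^{2}$ is ``degenerate'' and that a non-degenerate curve $X_{\eta}$ ``cannot be contained in this degenerate quadric.'' That implication is false: a linearly non-degenerate irreducible curve lies on singular quadrics of rank $3$ and $4$ all the time --- indeed the entire point of Theorem \ref{theorem3} and Proposition \ref{prepro1} of this paper is the study of exactly such quadrics through $X_{\eta}$. Non-degeneracy of the curve only forbids containment in a quadric of rank $\leq 2$ (a hyperplane pair or double hyperplane), so your argument requires showing $\operatorname{rank}(g_{2})\leq 2$; the phrase ``a $4$-jet analysis shows $g_{2}$ has very small rank'' asserts this without any computation, and it is not clear it is even true for every exceptional double point. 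The paper closes this case with no jet analysis at all: \cite[Corollary $6.2.5$]{S-C-M} gives $X_{\eta}\subset \mathbb{P}TC_{\tilde{\mathcal{L}}}(\mathbb{E}(\pi_{\eta}))$ \emph{if and only if} $h^{0}(\tilde{\mathcal{L}})\geq 4$, which rules out exceptional singularities (where $h^{0}(\tilde{\mathcal{L}})=2$) uniformly. To repair your proof you should either establish the rank bound $\leq 2$ for the exceptional tangent cone, or replace this step with the containment criterion of Casalaina--Martin as the paper does.
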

\begin{proof}{
Let $\tilde{\mathcal{L}}\in \Sing(\mathbb{E}(\pi_{\eta}))$ be a
double point of $\mathbb{E}(\pi_{\eta})$. Consider that using
 \cite[Corollary
$6.2.5$]{S-C-M}, we have $\phi_{K_{X}\cdot\eta}(X)\subset
\mathbb{P}TC_{\tilde{\mathcal{L}}}(\mathbb{E}(\pi_{\eta}))$ if and
only if $h^{0}(\tilde{\mathcal{L}})\geq 4$.
 If $h^{0}(\tilde{\mathcal{L}})>4$ then one would have
$h^{0}(\tilde{\mathcal{L}})\geq 6$. Therefore by Riemann-Kempf singularity theorem
$\deg(\mathbb{P}TC_{\tilde{\mathcal{L}}}(\Theta_{\tilde{X}}))\geq
6$. Now since
$\mathbb{P}TC_{\tilde{\mathcal{L}}}(\mathbb{E}(\pi_{\eta}))=2\mathbb{P}TC_{\tilde{\mathcal{L}}}(\Theta_{\tilde{X}})\cdot\mathbf{P}(H^{0}(K_{X}\cdot\eta))$,
the hypersurface
$\mathbb{P}TC_{\tilde{\mathcal{L}}}(\mathbb{E}(\pi_{\eta}))$ would
be of degree at least $3$ and vice versa.

 }
\end{proof}
\begin{lemma}\label{l}
If $\tilde{p},\tilde{q}\in \tilde{X}$ and  $L_{\tilde{p},\tilde{q}}$
is the line in $\mathbf{P}(H^{0}(K_{\tilde{X}}))$ joining
$\tilde{p}$ to $\tilde{q}$ then
$\phi_{K_{X}\cdot\eta}(\pi(\tilde{p}))=\overline{p}=L_{\tilde{p},\tilde{q}}\cap
\mathbf{P}(H^{0}(K_{X}\cdot\eta))$.
\end{lemma}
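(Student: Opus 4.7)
The plan is to exploit the eigenspace decomposition of $H^{0}(K_{\tilde{X}})$ under the covering involution $\iota$ of $\pi$, and to read off the intersection of the line $L_{\tilde{p},\tilde{q}}$ with the Prym-canonical subspace by working in anti-invariant coordinates. Throughout I take $\tilde{q}=\iota(\tilde{p})$, which is forced by the tacit requirement $\pi(\tilde{p})=\pi(\tilde{q})$ needed for the statement to make sense, and I assume that $X$ and $\tilde{X}$ are non-hyperelliptic so that $\phi_{K_{\tilde{X}}}$ is an embedding and $L_{\tilde{p},\tilde{q}}$ is an honest line.

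First, since $\pi$ is \'etale one has $K_{\tilde{X}}=\pi^{\ast}K_{X}$, and the identity $\pi_{\ast}\mathcal{O}_{\tilde{X}}=\mathcal{O}_{X}\oplus\eta$ yields, after tensoring with $K_{X}$ and taking global sections,
$$H^{0}(K_{\tilde{X}})=H^{0}(K_{X})\oplus H^{0}(K_{X}\cdot\eta),$$
with the first summand $\iota^{\ast}$-invariant and the second $\iota^{\ast}$-anti-invariant. Fixing bases $\{s_{i}\}$ of $H^{0}(K_{X})$ and $\{t_{j}\}$ of $H^{0}(K_{X}\cdot\eta)$ gives homogeneous coordinates on $\mathbf{P}(H^{0}(K_{\tilde{X}}))$ in which the linear subspace $\mathbf{P}(H^{0}(K_{X}\cdot\eta))$ is cut out by the vanishing of all the $s_{i}$-coordinates.

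In these coordinates, $\iota^{\ast}s_{i}=s_{i}$ and $\iota^{\ast}t_{j}=-t_{j}$ yield
$$\phi_{K_{\tilde{X}}}(\tilde{p})=\bigl(s_{i}(\tilde{p})\,;\,t_{j}(\tilde{p})\bigr),\qquad \phi_{K_{\tilde{X}}}(\tilde{q})=\bigl(s_{i}(\tilde{p})\,;\,-t_{j}(\tilde{p})\bigr),$$
so a general point of $L_{\tilde{p},\tilde{q}}$ takes the form $\bigl((\lambda+\mu)s_{i}(\tilde{p})\,;\,(\lambda-\mu)t_{j}(\tilde{p})\bigr)$. Non-hyperellipticity of $X$ forces some $s_{i}(\tilde{p})$ to be nonzero, so the only point of $L_{\tilde{p},\tilde{q}}$ lying in $\mathbf{P}(H^{0}(K_{X}\cdot\eta))$ is the one with $\lambda+\mu=0$, and it has projective coordinates $(t_{j}(\tilde{p}))_{j}$.

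It remains to identify this class with $\phi_{K_{X}\cdot\eta}(\pi(\tilde{p}))$. The $t_{j}$ enter $H^{0}(K_{\tilde{X}})$ via the pullback map $H^{0}(X,K_{X}\cdot\eta)\hookrightarrow H^{0}(\tilde{X},K_{\tilde{X}})$ arising from the \'etale trivialization $\pi^{\ast}\eta\cong\mathcal{O}_{\tilde{X}}$; over an evenly covered neighborhood of $\pi(\tilde{p})$ one has $t_{j}(\tilde{p})=t_{j}(\pi(\tilde{p}))$ up to a common fiber rescaling that disappears projectively. Carrying out this comparison consistently across the full basis $\{t_j\}$ is the only genuinely technical point in the argument, and is the step where I expect the proof to need the most attention.
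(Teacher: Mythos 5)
Your argument is correct and essentially complete, but it is worth pointing out that the paper offers no proof of this lemma at all: it simply cites \cite[page $4954$]{S-V2}, where Smith and Varley establish exactly this statement via the same invariant/anti-invariant splitting $H^{0}(K_{\tilde{X}})=H^{0}(K_{X})\oplus H^{0}(K_{X}\cdot\eta)$ that you use. So you have supplied the missing argument rather than a genuinely different one. Three small remarks. First, you are right that the statement only makes sense for $\tilde{q}=\iota(\tilde{p})$; the paper's phrasing is loose here and your reading is the intended one. Second, the nonvanishing of some $s_{i}(\tilde{p})$ requires only base-point-freeness of $K_{X}$, which holds for every curve of genus $\geq 2$, so non-hyperellipticity of $X$ is not what that step uses; what you do need, and should make explicit, is that some $t_{j}(\tilde{p})\neq 0$ (i.e. that $K_{X}\cdot\eta$ is base-point-free at $\pi(\tilde{p})$), since otherwise $\phi_{K_{\tilde{X}}}(\tilde{p})=\phi_{K_{\tilde{X}}}(\iota\tilde{p})$ and $L_{\tilde{p},\tilde{q}}$ is not a line --- in the paper's applications $K_{X}\cdot\eta$ is assumed very ample, so this is harmless. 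Third, the ``genuinely technical point'' you flag at the end is in fact innocuous: the splitting of $H^{0}(K_{\tilde{X}})$ is induced by a single isomorphism $\pi^{*}\eta\cong\mathcal{O}_{\tilde{X}}$ fixed once and for all, so every $t_{j}$ is compared with its counterpart downstairs via the same nowhere-vanishing rescaling at $\tilde{p}$, and the projective point $\bigl(t_{j}(\tilde{p})\bigr)_{j}$ is precisely $\phi_{K_{X}\cdot\eta}(\pi(\tilde{p}))$. With those clarifications your proof stands as a correct, self-contained replacement for the citation.
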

\begin{proof}{
This is claimed and proved in \cite[page $4954$]{S-V2}. }
\end{proof}
\section{Rank $\leq 4$ Projectivized Tangent Cone Quadrics}

Assume that $F$ is a smooth projective curve of genus $g$ with a very ample Prym-canonical line bundle
$K_{F}\cdot \eta$.
\begin{theorem}\label{theorem3}
Assume that $F$ is a smooth non-hyperelliptic projective curve of
genus $g$ with a very ample Prym-canonical line bundle $K_{F}\cdot
\eta$.
 Assume moreover that $\pi_{\eta}:\tilde{F}\rightarrow F$
is an $\acute{e}$tale double cover of $F$.
 Let $Q$ be a quadric of rank equal or less than
$4$ containing $F_{\eta}$ in $\mathbf{P}(H^{0}(K_{F}\cdot\eta))$.
Furthermore for
$\tilde{\mathcal{L}}\in\Sing(\mathbb{E}(\pi_{\eta}))$, assume that
$Q$ is the projectivized tangent cone of $\mathbb{E}(\pi_{\eta})$ at
$\tilde{\mathcal{L}}$. Then one of the rulings of $Q$ cuts a
$g^{1}_{d}$ on $F_{\eta}$ with $g-3 \leq d \leq g-1$ and
$2\mathcal{O}(g^{1}_{d}))\otimes\mathcal{L}=K_{F}$,
 for some line bundle $\mathcal{L}$ on $F$
 which $\mathcal{L}$ is of degree $0$, $2$ or $4$. Additionally, the
$g^{1}_{d}$ is complete when $d\in\{g-2,g-1\}$. It is complete in
the case $d=g-3$ as well, provided that $g=7$.
\end{theorem}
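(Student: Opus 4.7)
The plan is to adapt the Andreotti--Mayer--Kempf strategy (Theorem \ref{thm0}) to the Prym setting, using Theorem \ref{Martin thm} to pin down the stable singularity and Lemma \ref{l} to transfer pencils on $\tilde{F}$ to rulings of $Q$. First I would apply the proof of Theorem \ref{Martin thm}: the containment $F_{\eta}\subset Q$ excludes exceptional singularities, and any $h^{0}(\tilde{\mathcal{L}})\geq 6$ would make $\mathbb{P}TC_{\tilde{\mathcal{L}}}(\mathbb{E}(\pi_{\eta}))$ a hypersurface of degree $\geq 3$ by the Riemann--Kempf formula, contradicting the rank-$\leq 4$ hypothesis. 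Hence $h^{0}(\tilde{\mathcal{L}})=4$ and $|\tilde{\mathcal{L}}|$ is a $g^{3}_{2g-2}$ on $\tilde{F}$. A rank-$\leq 4$ quadric in $\mathbf{P}^{g-2}$ is a cone over a smooth quadric in $\mathbf{P}^{3}$ or over a smooth conic in $\mathbf{P}^{2}$, and its rulings are $(g-4)$-planes in a $\mathbf{P}^{1}$-family; intersecting $F_{\eta}$ with the members of such a ruling produces a morphism $F\to\mathbf{P}^{1}$ and hence a linear series $g^{1}_{d_i}$ on $F$, with $d_{1}+d_{2}=\deg F_{\eta}=2g-2$ and $\mathcal{O}(g^{1}_{d_{1}})\otimes\mathcal{O}(g^{1}_{d_{2}})=K_{F}\otimes\eta$ obtained from the hyperplane-class identity on the smooth locus of $Q$.

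Setting $d:=\min\{d_{1},d_{2}\}$ gives the upper bound $d\leq g-1$ at once, and the relation $2\mathcal{O}(g^{1}_{d})\otimes\mathcal{L}=K_{F}$ holds tautologically by taking $\mathcal{L}:=K_{F}\otimes\mathcal{O}(-2g^{1}_{d})$ of degree $2(g-1)-2d$. The substantive content is the lower bound $d\geq g-3$, which I expect to be the main obstacle. My plan here is to link each ruling to a pencil inside $|\tilde{\mathcal{L}}|$ or $|\iota^{*}\tilde{\mathcal{L}}|$ through Lemma \ref{l}: a line in $\mathbf{P}(H^{0}(K_{\tilde{F}}))$ joining $\tilde{p}$ and $\iota(\tilde{p})$ meets $\mathbf{P}(H^{0}(K_{F}\cdot\eta))$ at $\phi_{K_{F}\cdot\eta}(\pi(\tilde{p}))$, so the divisors of $g^{1}_{d_i}$ are norms of divisors of a pencil on $\tilde{F}$ extracted from the $g^{3}_{2g-2}$. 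Excluding $d\leq g-4$ should then come from applying Clifford's theorem to the complementary series $g^{1}_{2g-2-d}$, together with the non-hyperelliptic hypothesis, since an overly small $d$ would force on $F$ a special linear series whose Clifford index is incompatible with the stable-singularity structure of $\tilde{\mathcal{L}}$.

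For the completeness statements I would argue case by case, combining Clifford with the half-canonical relation. When $d=g-1$, a noncomplete pencil yields a $g^{2}_{g-1}$; paired with the identity $2\mathcal{O}(g^{1}_{g-1})\otimes\mathcal{L}=K_{F}$ (with $\mathcal{L}$ a theta characteristic) this forces $F$ hyperelliptic, contradicting the hypothesis. When $d=g-2$, a noncomplete pencil sits in a $g^{2}_{g-2}$, whose Serre residual $K_{F}-g^{2}_{g-2}$ has $h^{0}\geq 3$; pulling back to $\tilde{F}$ and comparing with $h^{0}(\tilde{\mathcal{L}})=4$ should produce either an extra section of $\tilde{\mathcal{L}}$ or a hyperelliptic structure on $F$, both excluded. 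Finally for $d=g-3$ and $g=7$, completeness of the $g^{1}_{4}$ on a non-hyperelliptic genus-$7$ curve follows directly from Clifford, since a $g^{2}_{4}$ exists only on hyperelliptic curves.
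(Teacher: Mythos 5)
Your skeleton (use Theorem \ref{Martin thm} to force $h^{0}(\tilde{\mathcal{L}})=4$, let the two rulings cut pencils $g^{1}_{d_1}$, $g^{1}_{d_2}$ with $d_1+d_2=2g-2$ and $\mathcal{O}(g^{1}_{d_1})\otimes\mathcal{O}(g^{1}_{d_2})=K_F\cdot\eta$, then treat completeness case by case) is the right one and agrees with the paper's, but the proposal has genuine gaps exactly at the places you hedge. The most serious is the lower bound $d\geq g-3$, which you flag as ``the main obstacle'' and propose to settle by Clifford's theorem applied to the complementary series $g^{1}_{2g-2-d}$. That route is a dead end: if $d\leq g-4$ the residual pencil has degree $\geq g+2$, so it need not be special and Clifford gives no contradiction. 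The paper's argument is far more elementary and does not involve the Clifford index at all: a divisor $E$ cut out by a ruling spans that ruling, which is a linear space of codimension $2$, hence of dimension $g-4$, in $\mathbf{P}(H^{0}(K_F\cdot\eta))\cong\mathbf{P}^{g-2}$; since $\deg E=d$ points span at most a $\mathbf{P}^{d-1}$, one gets $d-1\geq g-4$ at once, and geometric Riemann--Roch then yields $h^{0}(K_F\cdot\eta-E)=2$. (The upper bound $d\leq g-1$ the paper gets from Lemma \ref{l}: each point of $E$ corresponds to a conjugate pair in the degree-$2g-2$ divisor $\tilde{E}$ of $\tilde{\mathcal{L}}$; your $d=\min\{d_1,d_2\}\leq g-1$ also works.)

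The completeness arguments for $d=g-1$ and $d=g-2$ are also not sound as written. For $d=g-1$ you claim that a non-complete pencil gives a half-canonical $g^{2}_{g-1}$ and that this forces $F$ hyperelliptic; that implication is false --- a smooth plane quintic carries the theta characteristic $\mathcal{O}(1)$ with $h^{0}=3$ and is not hyperelliptic, and analogous half-canonical nets occur on special non-hyperelliptic curves of higher genus. The paper argues instead through the double cover: Lemma \ref{l} identifies $\pi_{\eta}^{*}(g^{1}_{g-1})$ with $\tilde{\mathcal{L}}$, so $h^{0}(\tilde{\mathcal{L}})=h^{0}(D)+h^{0}(D\cdot\eta)$; if $h^{0}(D)>2$ this exceeds $4$, hence by parity is $\geq 6$, and Riemann--Kempf makes the tangent cone a hypersurface of degree $\geq 3$, contradicting that $Q$ is a quadric. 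For $d=g-2$ your ``should produce either an extra section of $\tilde{\mathcal{L}}$ or a hyperelliptic structure'' is a statement of intent, not an argument; the paper's mechanism is to write $\tilde{\mathcal{L}}=\mathcal{O}(\pi_{\eta}^{*}D)\otimes\mathcal{O}(\tilde{p}+\tilde{q})$ and use $4=h^{0}(\tilde{\mathcal{L}})\geq h^{0}(D)+h^{0}(D\cdot\eta)+h^{0}(\tilde{p}+\tilde{q})=h^{0}(D)+2$ to pin $h^{0}(D)=2$. Only your last case ($d=g-3$, $g=7$: Clifford excludes a $g^{2}_{4}$ on a non-hyperelliptic curve) coincides with the paper and is complete; the relation $2\mathcal{O}(g^{1}_{d})\otimes\mathcal{L}=K_F$ with $\deg\mathcal{L}\in\{0,2,4\}$ is indeed tautological once $g-3\leq d\leq g-1$ is in place, as you say.
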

\begin{proof}{
Assume that $\pi_{\eta}:\tilde{F}\rightarrow F$ is an
$\acute{e}$tale double covering. If $Q=Q_{\tilde{\mathcal{L}}}$ is a
projectivized tangent cone of $\Sing(\mathbb{E}(\pi_{\eta}))$ at
$\tilde{\mathcal{L}}$ which is a quadric of rank equal or less than
$4$ containing $F_{\eta}$, then one of its rulings cuts a
$g^{1}_{d}$, on $F_{\eta}$. For a divisor $E\in \mid g^{1}_{d}
\mid$, using the geometric Riemann-Roth theorem and considering that
the linear space $<E>$ inside $Q_{\tilde{\mathcal{L}}}$ is of
codimension $2$ in $\mathbf{P}(H^{0}(K_{F}\cdot\eta))$, one obtains
that $h^{0}(K_{F}\cdot\eta-E)=2$. Since $\dim(<E>)=g-4$ one has
$d=\deg(E)\geq g-3$.\\
 Set $\Lambda=<E>$ and consider that
$\Lambda=\tilde{\Lambda}\cap\mathbf{P}(H^{0}(K_{F}\cdot\eta))$ where
$\tilde{\Lambda}=<\tilde{E}>$ for some $\tilde{E}\in
\tilde{F}^{3}_{2g-2}$ such that
$\tilde{\mathcal{L}}=\mathcal{O}(\tilde{E})$, see \cite{S-V1}.
 For $p\in F_{\eta}$ setting $\pi_{\eta}^{-1}(p)=\{\tilde{p}, \tilde{q}\}$ using
Lemma \ref{l}, we have $p\in \Supp(E)$ if and only if $\{\tilde{p},
\tilde{q}\} \subset \Supp(\tilde{E})$. This observation implies that
$d=\deg(E)\leq g-1$.
\\

\noindent To see the completeness of $g^{1}_{d}$ assume that
$g^{1}_{d}
\subset \mid D \mid$ for some divisor $D$ in $g^{1}_{d}$ and set $\tilde{D}=\pi_{\eta}^{*}(D)$.\\
Assume first that $d=g-1$: then the equality
$h^{0}(D\cdot\eta)=h^{0}(K_{F}\cdot\eta-D)$ together with the
geometric Riemann-Roth theorem imply that $h^{0}(D\cdot\eta)=2$. The
assumption $d=g-1$, implies that $2g^{1}_{d}=K_{F_{\eta}}$ and one
can see from this that $\pi_{\eta}^{*}(g^{1}_{d})\in
\Sing(\mathbb{E}(\pi_{\eta}))$. In fact  by Lemma \ref{l}, for each
divisor $\Gamma\in \mid g^{1}_{d}\mid$ the divisor
$\tilde{\Gamma}=\pi^{*}_{\eta}(\Gamma)$ is a divisor associated to a
global section of $\tilde{\mathcal{L}}$, so one has
$\pi_{\eta}^{*}(g^{1}_{d})=\tilde{\mathcal{L}}$,
 in this case. Therefore $\pi_{\eta}^{*}(g^{1}_{d})\in
\Sing(\mathbb{E}(\pi_{\eta}))$.\\
If $h^{0}(g^{1}_{d})> 2$ then
$h^{0}(\pi_{\eta}^{*}(g^{1}_{d}))=h^{0}(g^{1}_{d})+h^{0}(g^{1}_{d}\cdot\eta)>
4$. This implies that
$Q_{\pi_{\eta}^{*}(g^{1}_{d})}=\cup_{\tilde{D}\in \mid
\pi_{\eta}^{*}(g^{1}_{d}) \mid}<\tilde{D}>$ is a hypersurface at
least of degree $6$ in $\mathbf{P}(H^{0}(K_{\tilde{F}}))$. Therefore
the hypersurface
$Q=\frac{1}{2}[(Q_{\pi_{\eta}^{*}(g^{1}_{d})})\cdot\mathbf{P}(H^{0}(K_{F}\cdot\eta))]$
would be a hypersurface of degree at least three. This by equality
of the quadrics $Q$ and $Q_{\tilde{\mathcal{L}}}$ is absurd. This
implies that the $g^{1}_{g-1}$ is complete.

 If $d=g-2$ then $h^{0}(D\cdot\eta)=1$ and there exist $\tilde{p},
\tilde{q}\in \tilde{F}$ such that
$\mathcal{O}(\tilde{D})\otimes\mathcal{O}(\tilde{p}+\tilde{q})\in
\Sing(\mathbb{E}(\pi_{\eta}))$. In fact as in the previous case for
each divisor $\Gamma\in \mid g^{1}_{d}\mid$ there are points
$\tilde{p}, \tilde{q}\in\tilde{F}$ such that
$\tilde{\Gamma}=\pi^{*}_{\eta}(\Gamma)+(\tilde{p}+\tilde{q})$ is a
divisor associated to a global section of $\tilde{\mathcal{L}}$. So
 one has
$$\mid\mathcal{O}(\tilde{D})\otimes\mathcal{O}(\tilde{p}+\tilde{q})\mid=\mid\tilde{\mathcal{L}}\mid\in
\Sing(\mathbb{E}(\pi_{\eta})).$$
 Now the relations
$$4=h^{0}(\mathcal{O}(\tilde{D})\otimes\mathcal{O}(\tilde{p}+\tilde{q}))\geq
h^{0}(D)+h^{0}(D\cdot\eta)+h^{0}(\mathcal{O}(\tilde{p}+\tilde{q}))=h^{0}(D)+1+1$$
imply that $h^{0}(D)=2=h^{0}(g^{1}_{d})$.

\noindent Finally if $d=g-3$ then for $g=7$, if $h^{0}(D)>2$ then
$F$ has a $g^{r}_{4}$ with $r\geq2$. This by Clifford's theorem and
non-hyper ellipticity of $F$ is a contradiction.
\\

\noindent Consider moreover that since
$h^{0}(K_{F}-2\mathcal{O}(g^{1}_{d}))\geq 1$, the line bundle
$\mathcal{L}:=K_{F}-2\mathcal{O}(g^{1}_{d}))$ is a line bundle
satisfying $2\mathcal{O}(g^{1}_{d}))\otimes\mathcal{L}=K_{F}$. }
\end{proof}

\begin{proposition}\label{prepro1}
Assume that $F$ and the assumptions about it are as in Theorem
\ref{theorem3}.
 Let $Q$ be a quadric of rank equal or less than 
 $4$ containing $F_{\eta}$ such that one of its rulings cuts a
complete $g^{1}_{d}$ on $F_{\eta}$ with $g-2 \leq d \leq g-1$ and
$2\mathcal{O}(g^{1}_{d})\otimes\mathcal{L} =K_{F}$,
 for some line bundle $\mathcal{L}$
which $\mathcal{L}$ is of degree $0$ or $2$ on $F$. Then $Q$ is a
projectivized tangent cone of $\Sing(\mathbb{E}(\pi_{\eta}))$.
\end{proposition}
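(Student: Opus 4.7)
The plan is, for each admissible $d \in \{g-2,g-1\}$, to exhibit a stable singularity $\tilde{\mathcal{L}} \in \Sing(\mathbb{E}(\pi_{\eta}))$ with $h^{0}(\tilde{\mathcal{L}}) = 4$ whose projectivized tangent cone is $Q$; this reverses the forward construction in the proof of Theorem \ref{theorem3}.

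For $d = g-1$, where $\deg \mathcal{L} = 0$, divisibility of $\Pic^{0}(F)$ lets me pick $\alpha \in \Pic^{0}(F)$ with $\alpha^{\otimes 2} = \mathcal{L}$ and set $\tilde{\mathcal{L}} := \pi_{\eta}^{*}(\mathcal{O}_{F}(g^{1}_{g-1}) \otimes \alpha)$. Then
\[
\Nm(\tilde{\mathcal{L}}) = 2\mathcal{O}(g^{1}_{g-1}) \otimes \alpha^{\otimes 2} = 2\mathcal{O}(g^{1}_{g-1}) \otimes \mathcal{L} = K_{F},
\]
and the splitting $\pi_{\eta*}\pi_{\eta}^{*}M = M \oplus (M \otimes \eta)$ gives
\[
h^{0}(\tilde{\mathcal{L}}) = h^{0}(\mathcal{O}(g^{1}_{g-1}) \otimes \alpha) + h^{0}(\mathcal{O}(g^{1}_{g-1}) \otimes \alpha \otimes \eta).
\]
Completeness of $g^{1}_{g-1}$ makes the first summand $2$; the $2$-torsion ambiguity in $\alpha$ can be exploited to place $\tilde{\mathcal{L}} \in \mathbb{P}(\pi_{\eta})$ with $h^{0}(\tilde{\mathcal{L}})$ even, and the rank-$\leq 4$ hypothesis on $Q$ rules out $h^{0}(\tilde{\mathcal{L}}) \geq 6$ via Theorem \ref{Martin thm} and Riemann--Kempf, so $h^{0}(\tilde{\mathcal{L}}) = 4$. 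For $d = g-2$, where $\deg \mathcal{L} = 2$, the line bundle $\mathcal{L} = K_{F} - 2\mathcal{O}(g^{1}_{g-2})$ is taken effective (implicit in the existence of the given ruling); writing $\mathcal{L} = \mathcal{O}_{F}(p+q)$, picking preimages $\tilde p \in \pi_{\eta}^{-1}(p)$, $\tilde q \in \pi_{\eta}^{-1}(q)$, and setting $\tilde{\mathcal{L}} := \pi_{\eta}^{*}\mathcal{O}(g^{1}_{g-2}) \otimes \mathcal{O}_{\tilde F}(\tilde p + \tilde q)$ gives $\Nm(\tilde{\mathcal{L}}) = K_{F}$; the pencil $\{\pi_{\eta}^{*}\Gamma + \tilde p + \tilde q : \Gamma \in g^{1}_{g-2}\} \subset |\tilde{\mathcal{L}}|$ forces $h^{0}(\tilde{\mathcal{L}}) \geq 4$, again pinned at $4$ by the rank hypothesis.

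With $\tilde{\mathcal{L}}$ so produced, Theorem \ref{Martin thm} combined with Riemann--Kempf identifies
\[
Q_{\tilde{\mathcal{L}}} = \Bigl(\bigcup_{\tilde E \in |\tilde{\mathcal{L}}|} \langle \tilde E \rangle\Bigr) \cap \mathbf{P}(H^{0}(K_{F}\cdot\eta)),
\]
and Lemma \ref{l}, applied to the conjugate pairs comprising $\pi_{\eta}^{*}\Gamma$, shows that each slice $\langle \pi_{\eta}^{*}\Gamma \rangle \cap \mathbf{P}(H^{0}(K_{F}\cdot\eta))$ (or, in the $d = g-2$ case, $\langle \pi_{\eta}^{*}\Gamma + \tilde p + \tilde q \rangle \cap \mathbf{P}(H^{0}(K_{F}\cdot\eta))$) coincides with the span in the Prym-canonical space of $\Gamma$ (respectively $\Gamma + p + q$). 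Hence one ruling of $Q_{\tilde{\mathcal{L}}}$ cuts precisely the prescribed complete $g^{1}_{d}$ on $F_{\eta}$; since a rank-$\leq 4$ quadric in $\mathbf{P}(H^{0}(K_{F}\cdot\eta))$ is determined by either of its rulings of codimension-$2$ subspaces, $Q = Q_{\tilde{\mathcal{L}}}$.

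The principal technical obstacle is the bookkeeping for $h^{0}(\tilde{\mathcal{L}})$: securing the correct parity and the $\mathbb{P}(\pi_{\eta})$-versus-$Z_{1}$ membership in the $d = g-1$ case through the square-root ambiguity of $\alpha$, and, in the $d = g-2$ case, handling effectiveness of $\mathcal{L}$ and ensuring that no general position of $(\tilde p,\tilde q)$ among preimages over $\{p,q\}$ pushes $h^{0}$ above $4$. Once $\tilde{\mathcal{L}}$ is correctly in hand, the identification $Q = Q_{\tilde{\mathcal{L}}}$ is a formal consequence of Lemma \ref{l} together with the uniqueness of a rank-$\leq 4$ quadric through $F_{\eta}$ with a prescribed ruling.
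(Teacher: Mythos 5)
Your overall strategy coincides with the paper's: build $\tilde{\mathcal{L}}$ by twisting $\pi_{\eta}^{*}\mathcal{O}(g^{1}_{d})$ with a ``half'' of $\pi_{\eta}^{*}\mathcal{L}$, check $\Nm(\tilde{\mathcal{L}})=K_{F}$, show $h^{0}(\tilde{\mathcal{L}})=4$, and identify $Q$ with $Q_{\tilde{\mathcal{L}}}$. But the step where you establish $h^{0}(\tilde{\mathcal{L}})=4$ --- which is the entire content of the proposition --- has genuine gaps in both cases. For $d=g-2$, the pencil $\{\pi_{\eta}^{*}\Gamma+\tilde{p}+\tilde{q}:\Gamma\in g^{1}_{g-2}\}$ is a one-parameter family of divisors in $|\tilde{\mathcal{L}}|$ and therefore only forces $h^{0}(\tilde{\mathcal{L}})\geq 2$, not $\geq 4$. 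What the splitting $\pi_{\eta*}\pi_{\eta}^{*}M=M\oplus(M\otimes\eta)$ actually yields here is $h^{0}(\pi_{\eta}^{*}\mathcal{O}(g^{1}_{g-2}))=h^{0}(g^{1}_{g-2})+h^{0}(g^{1}_{g-2}\cdot\eta)=2+1=3$ (the value $h^{0}(g^{1}_{g-2}\cdot\eta)=1$ coming from Riemann--Roch plus the codimension-$2$ span of the ruling), hence only $h^{0}(\tilde{\mathcal{L}})\geq 3$. The paper closes the gap from $3$ to $4$ with a genuinely geometric argument: it picks $\tilde{D}_{1}\in|\pi_{\eta}^{*}\mathcal{O}(g^{1}_{g-2})|$ whose support contains $p$, writes $\bar{D}_{1}=\tilde{D}_{1}+p+q=\bar{M}+2p+q$, and computes $\dim\langle\bar{D}_{1}\rangle=\dim\langle\bar{M}+p\rangle+1=2g-6$ to get $h^{0}(\bar{D}_{1})=4$ by geometric Riemann--Roch on $\tilde{F}$. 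You have no substitute for this step.

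For $d=g-1$ the square-root detour is both unnecessary and incorrect as written. The first summand in your display is $h^{0}(\mathcal{O}(g^{1}_{g-1})\otimes\alpha)$, and completeness of $g^{1}_{g-1}$ says nothing about $h^{0}$ of a nontrivial twist; moreover, replacing $\alpha$ by another square root changes $\tilde{\mathcal{L}}$ into a line bundle whose linear system is governed by $|\mathcal{O}(g^{1}_{g-1})\otimes\alpha'|$ rather than by the given $g^{1}_{g-1}$ (the only square roots returning $\pi_{\eta}^{*}\mathcal{O}(g^{1}_{g-1})$ are $\mathcal{O}_{F}$ and $\eta$), so ``exploiting the $2$-torsion ambiguity'' cannot fix the parity while keeping the prescribed quadric. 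The intended route, as in the paper, is to note that $\mathcal{L}$ is effective of degree $0$, hence $\mathcal{L}=\mathcal{O}_{F}$, take $\tilde{\mathcal{L}}=\pi_{\eta}^{*}\mathcal{O}(g^{1}_{g-1})$, and compute both summands equal to $2$, using $h^{0}(g^{1}_{g-1}\cdot\eta)=h^{0}(K_{F}\cdot\eta-g^{1}_{g-1})=2$ (Riemann--Roch, Serre duality, and the codimension-$2$ span of the ruling). Finally, the recurring device ``pinned at $4$ by the rank hypothesis'' is circular: you cannot use properties of $Q$ to bound $h^{0}(\tilde{\mathcal{L}})$ before you have proved $Q=Q_{\tilde{\mathcal{L}}}$; if $h^{0}(\tilde{\mathcal{L}})\geq 6$ the tangent cone is a hypersurface of degree at least $3$, so the identification would simply fail rather than contradict the rank of $Q$.
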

\begin{proof}{
 Assume that $Q\in \mid \mathcal{I}_{F_{\eta}}(2) \mid$
is of rank equal or less than $4$ such that one of its rulings cuts
a complete $g^{1}_{d}$ with $g-2\leq d \leq g-1$ and
 $2\mathcal{O}(g^{1}_{d})\otimes\mathcal{L}
= K_{F}$. If $\mathcal{L}=\mathcal{O}_{F}(p_{1}+p_{2}+\cdots+p_{t})$
and
$\pi_{\eta}^{*}(\mathcal{L})=\mathcal{O}_{\tilde{F}}(\bar{p}_{1}+\bar{p}_{2}+\cdots+\bar{p}_{t}+\bar{q}_{1}+\bar{q}_{2}+\cdots+\bar{q}_{t})$,
where $\bar{p}_{i}$ and $\bar{q}_{i}$ are conjugate, then for a sub
divisor $\tilde{D}_{1}$ of
$\tilde{D}=\bar{p}_{1}+\bar{p}_{2}+\cdots+\bar{p}_{t}+\bar{q}_{1}+\bar{q}_{2}+\cdots+\bar{q}_{t}$
which is of degree $\frac{1}{2}\deg(\tilde{D})$ and no two points of
its support are conjugate, setting
$$\frac{1}{2}(\pi_{\eta}^{*}\mathcal{L}):=\mathcal{O}_{\tilde{F}}(\tilde{D}_{1})
\quad , \quad
\tilde{\mathcal{L}}=\pi_{\eta}^{*}(\mathcal{O}(g^{1}_{d}))\otimes\frac{1}{2}(\pi_{\eta}^{*}\mathcal{L}).$$
one has $\Nm(\tilde{\mathcal{L}})=K_{F}$. This reads to say that
$\tilde{\mathcal{L}}\in \Ker(\Nm)=\mathbb{P}(\pi_{\eta})\cup Z_{1}$
where $Z_{1}$ is the isomorphic copy of $\mathbb{P}(\pi_{\eta})$
which we already introduced in backgrounds.

Consider the relations:
$$\begin{array}{ccc}
h^{0}(\tilde{\mathcal{L}})&=&\!\!\!
h^{0}(\pi_{\eta}^{*}(\mathcal{O}(g^{1}_{d}))+\frac{1}{2}(\pi_{\eta}^{*}\mathcal{L}))\geq
h^{0}(\pi_{\eta}^{*}(\mathcal{O}(g^{1}_{d})))+h^{0}(\frac{1}{2}(\pi_{\eta}^{*}\mathcal{L}))\\
\\
&=&\!\!\!\!\!\!\!\!\!\!\!\!\!\!\!\!\!\!\!\!\!\!\!\!\!\!\!\!\!\!\!\!\!\!\!\!\!\!\!\!\!\!\!h^{0}(\mathcal{O}(g^{1}_{d}))+
h^{0}(\mathcal{O}(g^{1}_{d})\cdot\eta)+h^{0}(\frac{1}{2}(\pi_{\eta}^{*}\mathcal{L})).
\end{array}$$
If $d=g-1$ then
$h^{0}(\mathcal{O}(g^{1}_{d}))=h^{0}(\mathcal{O}(g^{1}_{d})\cdot\eta)=2$
and $\mathcal{L}$ has to be equal to $\mathcal{O}_{F}$. Therefore
$h^{0}(\tilde{\mathcal{L}})=4$ and so $\tilde{\mathcal{L}}\in
\Sing(\mathbb{E}(\pi_{\eta}))$.
 Moreover $Q_{\tilde{\mathcal{L}}}=Q$ and this implies that $Q$ is a projectivized tangent cone in this case.

In the case of $d\!=\!g-2$ one has
$h^{0}(\mathcal{O}(g^{1}_{d})\cdot\eta)=1$ which implies that
$$\begin{array}{ccc}
\dim(\mid\pi_{\eta}^{*}(\mathcal{O}(g^{1}_{d}))\mid
)&=&\!\!\!\!\!\!\!\!\!\!\!\!\!\!\!\!\!\!\!\!\!\!\!\!\!\!\!\!\!\!\!\!\!\!\!\!\!\!\!\!\!\!
h^{0}(\pi_{\eta}^{*}(\mathcal{O}(g^{1}_{d})))-1\\
&=&h^{0}(\mathcal{O}(g^{1}_{d}))+h^{0}(\mathcal{O}(g^{1}_{d})\cdot\eta)-1=2.
\end{array}$$
This reads to say that taking a global section $\sigma$ of
$\pi_{\eta}^{*}(\mathcal{O}(g^{1}_{d}))$ and considering its
associated divisor,  $\tilde{D}$, one has
$\dim(\mid\tilde{D}\mid)=2$. From this it can be seen that taking a
global section $\gamma$ of $\tilde{\mathcal{L}}$ and considering its
associated divisor $\tilde{B}$, there exists a divisor $\tilde{E}\in
\mid\tilde{B}\mid$ such that $h^{0}(\tilde{E})=4$. In fact for a
point $p$ in the support of a divisor associated to a global section
of the line bundle $\frac{1}{2}(\pi_{\eta}^{*}\mathcal{L})$, assume
that $\tilde{D}_{1}\in \mid\tilde{D}\mid$ is a divisor such that
$p\in\Supp(\tilde{D}_{1})$. Now set
$\bar{D}_{1}:=\tilde{D}_{1}+p+q=\bar{M}+2p+q$ for some divisor
$\bar{M}$ on $\tilde{F}$ and some point $q\in\tilde{F}$ such that
$p+q\in\frac{1}{2}(\pi_{\eta}^{*}\mathcal{L})$. Consider that
$\mathcal{O}(\bar{D}_{1})\in \mid\tilde{\mathcal{L}}\mid$ and
$$\begin{array}{ccc}
\dim(<\bar{D}_{1}>)&=&\!\!\!\!\!\!\!\!\!\!\!\!\!\!\!\!\!\!\!\!\!\!\!\!\!\!\dim(<\bar{M}+p+q>)\\
&=&\dim(<\bar{M}+p>)+1=2g-6.
\end{array}$$ This equivalently reads to say that $h^{0}(\bar{D}_{1})=4$
which implies that $h^{0}(\tilde{\mathcal{L}})=4$. Therefore
$\tilde{\mathcal{L}}\in \Sing(\mathbb{E}(\pi_{\eta}))$ which
finishes the proof.

}
\end{proof}
\section{2-Normality of General Tetragonal Curves and an Example of Prym Tetragonal Curve}
Let $X$ be an irreducible plane sextic curve with $\bar{x}$,
$\bar{y}$ and $\bar{z}$ as its nodes or double points. Assume
moreover that $\bar{x}$, $\bar{y}$ and $\bar{z}$ are collinear. Let
$i:C\rightarrow X$ be its normalization. Notice that by genus
formula for plane curves, $X$, and consequently $C$, is of genus
$7$. Now on the curve $C$ consider the linear series $|5H-\Delta|$
for which $H=i^{*}(\mathcal{O}_{X}(1))$, $\Delta =
x_{1}+x_{2}+y_{1}+y_{2}+z_{1}+z_{2}$ with $i^{-1}(\bar{x})=\{x_{1},
x_{2}\}$, $i^{-1}(\bar{y})=\{y_{1}, y_{2}\}$ and
$i^{-1}(\bar{z})=\{z_{1}, z_{2}\}$. Notice that $K_{C}=3H-\Delta$
and $\deg(5H-\Delta)=24$. Now take a divisor $D_{12}\in C^{(12)}$
such that $2D_{12}$ belongs to $|5H-\Delta|$ and set
$\eta:=2H-D_{12}$. Trivially
 $\eta^{2}=4H-2D_{12}\sim 4H-(5H-\Delta)=\Delta-H\sim O$ 
 where the last equality holds because the points $\bar{x}$, $\bar{y}$ and
$\bar{z}$ are collinear. Notice moreover that the lines passing
through one of the singularities of the curve $X$ define a base
point free $g^{1}_{4}$ on $X$ and $i^{*}(g^{1}_{4})$ is a base point
free $g^{1}_{4}$ on $C$. This implies that $C$ is an irreducible,
nonsingular, tetragonal curve of genus $7$ with three number of base
point free $g^{1}_{4}$'s.

\noindent Next we verify the existence of a curve $C$ admitting a
$D_{12}$ with mentioned properties:
\begin{theorem}{\label{thm1}
There exists a curve $C$ admitting a divisor $D_{12}$ with the
mentioned properties and admitting a globally generated and very
ample prym-canonical line bundle $K_{C}\cdot\eta$. }
\end{theorem}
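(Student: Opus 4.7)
The plan is to realize $C$ as the normalization of a plane sextic $X$ with three collinear ordinary double points, then construct the required $D_{12}$ and $\eta$ from a $2$-torsion point of $J(C)$, using the linear equivalences already exhibited in the paragraph preceding the statement. First, I would fix a line $L_{0}\subset \mathbf{P}^{2}$ and three distinct points $p_{1},p_{2},p_{3}\in L_{0}$; in $|\mathcal{O}_{\mathbf{P}^{2}}(6)|\cong\mathbf{P}^{27}$, requiring a double point at each $p_{i}$ imposes three independent linear conditions (vanishing of the polynomial and of its two partial derivatives), so the sub-linear system $V$ of sextics singular at all three points has dimension $18$. A standard Bertini-type argument then shows that a generic $X\in V$ is irreducible, has only the $p_{i}$ as singularities, and that each of them is an ordinary double point, so its normalization $i:C\to X$ is smooth of genus $g=\binom{5}{2}-3=7$. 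For generic such $X$ the line $L_{0}$ is transverse to both branches at every node, so $i^{*}L_{0}=\Delta$ on $C$, which delivers $\Delta\sim H$ and hence $K_{C}\sim 2H$ and $5H-\Delta\sim 2K_{C}$.

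Next, I would pick any non-trivial $2$-torsion element $\theta\in J(C)[2]$, which exists since $|J(C)[2]|=2^{14}$. Because $\deg(K_{C}+\theta)=2g-2=12$ and $\theta\neq 0$, Riemann-Roch gives $h^{0}(K_{C}+\theta)=g-1=6>0$, so $|K_{C}+\theta|$ contains an effective representative $D_{12}\in C^{(12)}$. Then $2D_{12}\sim 2K_{C}\sim 5H-\Delta$, so $D_{12}$ has the desired property, and $\eta:=2H-D_{12}\sim -\theta$ is a non-trivial square root of $\mathcal{O}_{C}$, producing the \'etale double cover.

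Finally, it remains to verify that $K_{C}\cdot\eta\sim K_{C}-\theta$ can be arranged to be globally generated and very ample. A Riemann-Roch computation on this degree-$12$ line bundle yields the following criteria: global generation fails at $p\in C$ iff $\theta\in C-C$, and very ampleness fails at $\{p,q\}$ iff $\theta\in W_{2}-W_{2}$ inside $J(C)$. The three distinct base-point-free $g^{1}_{4}$'s given by lines through each individual node, together with the Castelnuovo-Severi inequality, rule out any $g^{1}_{3}$ or $g^{1}_{2}$ on $C$ for generic $X$, so $C$ is non-hyperelliptic and non-trigonal with Clifford index exactly $2$. Non-hyperellipticity forces $(C-C)\cap J(C)[2]=\{0\}$, so any non-trivial $\theta$ gives global generation. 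The hard part will be very ampleness, since tetragonality of $C$ precludes a direct appeal to Lange-Sernesi; I would argue that $W_{2}-W_{2}$ is at most $4$-dimensional in the $7$-dimensional $J(C)$, and that a dimension count on the family parameterizing pairs $(X,\theta)$ shows that for generic $X\in V$ some non-trivial $\theta\in J(C)[2]$ lies outside $W_{2}-W_{2}$; alternatively, and more cleanly, one invokes Lemma \ref{lem1} applied to the (generic) tetragonal curve $C$ to obtain the required very ampleness.
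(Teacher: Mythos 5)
Your overall strategy coincides with the paper's (normalize a plane sextic with three collinear ordinary double points, produce $\eta$ as a nontrivial $2$-torsion class, then check global generation and very ampleness), and one piece of your argument is actually cleaner than the paper's: taking $D_{12}$ to be an effective member of $|K_{C}+\theta|$ for a nontrivial $\theta\in \J(C)[2]$ immediately gives $2D_{12}\sim 2K_{C}\sim 5H-\Delta$ and $\eta=2H-D_{12}\sim-\theta\neq \mathcal{O}_{C}$, whereas the paper constructs $D_{12}$ via a contact quintic and leaves the nontriviality of $\eta$ implicit. Your global generation step (non-hyperellipticity forces $(C-C)\cap \J(C)[2]=\{0\}$) is correct and is exactly the Lange--Sernesi lemma the paper cites. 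However, two steps are genuinely incomplete. First, the assertion that ``a standard Bertini-type argument'' yields a generic member of the $18$-dimensional system $V$ that is irreducible with ordinary nodes exactly at the three \emph{collinear} points is precisely the point the paper singles out as requiring an actual proof: Bertini gives smoothness away from the base locus, but says nothing by itself about irreducibility or about the nature of the singularity at the three base points, and the collinearity of the nodes prevents you from quoting the standard general-position Severi-type statement. The paper fills this by exhibiting two explicit reducible members $h=Q_{1}Q_{2}L_{1}L_{2}$ and $k=\bar{Q}_{1}\bar{Q}_{2}\bar{L}_{1}\bar{L}_{2}$ with no common component and applying Bertini to the pencil they span; you need some such explicit member or an argument that $V$ is not composed with a pencil together with a local analysis at the $p_{i}$.

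Second, and more seriously, your ``cleaner'' fallback for very ampleness --- invoking Theorem \ref{lem1} --- is not available: that statement concerns a \emph{general} tetragonal curve, and $C$ is not one. Normalizations of sextics with three collinear nodes depend on at most $23-8=15$ moduli, while the tetragonal locus in $\mathcal{M}_{7}$ has dimension $17$, so $C$ lies in a proper closed subvariety of the tetragonal locus and the genericity hypothesis of Theorem \ref{lem1} (which ultimately rests on Debarre's result for general tetragonal curves) does not apply. What the paper actually does is extract from the proof of that theorem only the \emph{criterion}: very ampleness of $K_{C}\cdot\eta$ fails iff $\eta\sim x+y-z-w$ with $x+y\nsim z+w$ and $2x+2y\sim 2z+2w$ in a $g^{1}_{4}$, and then rules this configuration out geometrically for a general member of this particular family (it would force a line other than $L$ through a node to be tangent to $X$ at two further points). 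Your primary suggestion --- a dimension count on $W_{2}-W_{2}$ --- also does not close the gap as stated, because $W_{2}-W_{2}$ is a $4$-fold and $\J(C)[2]$ is a finite set, so there is no a priori reason a given $\theta$ avoids it; you would need to identify the finitely many bad $2$-torsion classes (those coming from pairs of ``double-double'' divisors in one of the three $g^{1}_{4}$'s, i.e.\ from bitangent lines through a node) and show either that a general $X$ in the family has none, or that some $\theta$ among the $2^{14}-1$ nontrivial classes avoids them.
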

\begin{proof}{
 Let $Q_{1}$ and $Q_{2}$ be quadrics in $\mathbf{P}^{2}$ tangent
to each other exactly in one point. Bezout's theorem implies that
they cut each other in two extra points $\bar{x}$ and $\bar{y}$ with
multiplicity one. Consider the line $l$ passing through $\bar{x}$
and $\bar{y}$. Since the tangent variety of $Q_{1}$, (resp. $Q_{2}$)
fills up all the surface $\mathbf{P}^{2}$, an arbitrary point
$\bar{z}\in l$ lies on at least a tangent line of $Q_{1}$, (resp.
$Q_{2}$).
 Therefore for an arbitrary point $\bar{z}$ on $l$ there
are a couple of lines $L_{1}$ and $L_{2}$ passing through $\bar{z}$
such that $L_{1}$ is tangent to $Q_{1}$ and $L_{2}$ is tangent to
$Q_{2}$. Then with this assignments, the reducible curve $X$ defined
by the polynomial $h=Q_{1}Q_{2}L_{1}L_{2}$ is a curve of degree six
which has three collinear ordinary singularities.

\noindent Denote by $t$ the point where $Q_{1}$ and $Q_{2}$ are
tangent to each other.
 A computation shows that there are infinitely many
quadrics through $\bar{x}$, $\bar{y}$ and $t$ such that each of
these quadrics has the same tangent line at the point $t$. In fact,
quadrics in $\mathbf{P}^{2}$ passing through the points $p=(1:0:0)$,
$q=(0:1:0)$ and $r=(0:0:1)$, are given by
$b_{0}x_{0}x_{1}+b_{1}x_{0}x_{2}+b_{2}x_{1}x_{2}=0$. These quadrics
have the line $b_{0}x_{0}+b_{1}x_{2}=0$ as their tangent line at the
point $p$. These imply that for fixed $d_{0}, d_{2}$ the infinitely
many quadrics $b_{0}x_{0}x_{1}+b_{1}x_{0}x_{2}+b_{2}x_{1}x_{2}=0$
passe through $p$, $q$, $r$ and are tangent to each other at the
point $p$.\\
Choose a couple of quadrics $\bar{Q}_{1}, \bar{Q}_{2}$ passing
through $\bar{x}$, $\bar{y}$, $t$, tangent to each other at $t$ and
distinct with $Q_{1}$ and $Q_{2}$ respectively. As in the sextic
$h$, there are lines $\bar{L}_{1}$ and $\bar{L}_{2}$ distinct
 from $L_{1}$ and $L_{2}$ respectively, passing through $\bar{z}$ and
are tangent to $\bar{Q}_{1}$ and $\bar{Q}_{2}$ respectively. Again
the curve $k=\bar{Q}_{1}\bar{Q}_{2}\bar{L}_{1}\bar{L}_{2}$ is a
reducible plane sextic having three collinear points $\bar{x}$,
$\bar{y}$ and $\bar{z}$ as its ordinary singularities. Now since $h$
and $k$ has no common irreducible component, Bertini's theorem
implies that $X$, a general member of the pencil generated by $h$
and $k$, is an irreducible plane sextic having three collinear
points as its ordinary singularities. Choosing the normalization of
$X$ gives the desired curve $C$.

\noindent To show existence of a $D_{12}$ with desirable properties,
take a plane quintic $T$ with three number of nodes $p_{1}$,
$p_{2}$, $p_{3}$ and passing through three distinct prescribed
collinear points $\bar{x}$, $\bar{y}$ and $\bar{z}$. Choose nine
extra points $p_{4},\cdots, p_{12}$ on $T$. Notice that passing
through points $p_{1},\cdots, p_{12}$, being tangent to $T$ at the
points $p_{4},\cdots, p_{12}$ and having three collinear points
$\bar{x}$, $\bar{y}$, $\bar{z}$ as only singularities,
 impose at most $24$ conditions on
the space of plane sextics. Since the space of plane sextics is of
dimension $27$, there are plane sextics $X$, passing through the
points $p_{1},\cdots, p_{12}$, being tangent to $T$ at the points
$p_{4},\cdots, p_{12}$ and having three collinear points $\bar{x}$,
$\bar{y}$, $\bar{z}$ as only singularities. On such a curve $X$,
setting $X_{12}:=p_{1}+\cdots+ p_{12}$ one obtains the desired
$D_{12}$.

 \noindent It can be seen easily that any Prym-canonical line bundle
 on a non-hyperelliptic curve is globally generated. See
 \cite[Lemma $2.1$]{L-S}.

\noindent To see very ampleness of a prym-canonical line bundle on
$C$, consider that the prym-canonical line bundle $K_{C}\cdot\eta$
with $\eta=2H-D_{12}$ is very ample. In fact the proof of Lemma
\ref{lem1} implies that in lack of very ampleness of
$K_{C}\cdot\eta$, there would exist two another points $z, w\in X$
such that
$$x+y\nsim z+w \quad , \quad 2x+2y\sim 2z+2w \in  g^{1}_{4}.$$
This means that taking $L$, the line passing through singularities
of $X$, there exists a line $\acute{L}$ other than $L$ that is
tangent to $X$ in two points $\alpha$ and $\beta$ distinct with $x,
y$ and $z$. But this is impossible for a general curve of type $X$.
}
\end{proof}
More than what we saw in Theorem \ref{thm1} and at least as an
independent interest, one can prove that any Prym-canonical line
bundle on a general tetragonal curve
is very ample and the Prym-canonical model of this line bundle is projectively normal:\\
Assume for a moment that the line bundle $K_{X}\cdot\eta$ is very
ample. Then $X_{\eta}$, the Prym-Canonical model of $X$ in
$\mathbf{P}(H^{0}(K_{X}\cdot\eta))$, is $2$-normal, namely the map
$$H^{0}(\mathbf{P}(H^{0}(K_{X}\cdot\eta)), \mathcal{O}_{\mathbf{P}(H^{0}(K_{X}\cdot\eta))}(2)) 
\rightarrow H^{0}(X_{\eta}, \mathcal{O}_{X_{\eta}}(2))$$
is surjective. In fact by \cite{G-L-P}, a curve $X$ is $n$-regular
if and only if, it is $(n-1)$-normal and the line bundle
$\mathcal{O}_{X}(n-2)$ is non-special
Therefore to prove $2$-normality of $X_{\eta}$, it is enough to
prove its $3$-regularity, which means that its sheaf of ideals,
$\mathcal{I}_{X_{\eta}}$, is 3-regular. Moreover for $n\geq 1$,
using the exact sequence
$$0\rightarrow
\mathcal{I}_{X_{\eta}}\rightarrow
\mathcal{O}_{\mathbf{P}(H^{0}(K_{X}\cdot\eta))}\rightarrow
\mathcal{O}_{X_{\eta}}\rightarrow 0,$$
 it is enough to prove that
the sheaf $\mathcal{O}_{X_{\eta}}$ is $2$-regular. Trivially
$H^{i}(X_{\eta},\mathcal{O}_{X_{\eta}}(2-i))=0$ for $i\geq 2$ by
Grothendieck's vanishing theorem. It remains to prove that
$H^{1}(X_{\eta},\mathcal{O}_{X_{\eta}}(1))=0$. To see this, consider
the isomorphisms
$$H^{1}(X_{\eta},\mathcal{O}_{X_{\eta}}(1))\cong H^{1}(X, K_{X}\cdot\eta)\cong (H^{0}(X, K_{X}-K_{X}\cdot\eta))^{\vee}=(H^{0}(X,\eta))^{\vee}$$
together with $H^{0}(X,\eta)=0$. These finish, $2$-normality of
$X_{\eta}$.

The discussion just has been done, proves projective normality of
curves $X$ with $\Cliff(X)\geq 3$, in which case any prym-canonical
line bundle $K_{X}\cdot\eta$ is very ample by \cite{L-S}. In the
case $\Cliff(X)=2$, it can happen that the line bundle
$K_{X}\cdot\eta$ is not very ample for special tetragonal curves
$X$. But it can be proved that for a general tethragonal curve $X$,
any prym-canonical line bundle $K_{X}\cdot\eta$ is very ample. In
fact,
  an equality
$$h^{0}(K_{X}\cdot\eta(-x-y))=h^{0}(K_{X}\cdot\eta)-1$$
for some points $x, y$ on $X$, implies that there exist another
points $z, w\in X$ such that
$$x+y\nsim z+w \quad , \quad 2x+2y\sim 2z+2w \in  g^{1}_{4}.$$
This by \cite{D} is absurd for a general tetragonal curve.

\noindent Summarizing we have proved:
\begin{theorem}{\label{lem1}
Assume that $X$ is
 a general smooth tetragonal
curve of genus $g$ and $\tilde{X}\rightarrow X$ an etale double
covering of $X$ induced by $\sigma \in\Pic(X)$ with $\sigma^2=0$.
Then $X_{\sigma}$, the Prym-canonical model of $X$ in
$\mathbf{P}(H^{0}(K_{X}\cdot\sigma))$, is projectively normal in
$\mathbf{P}(H^{0}(K_{X}\cdot\sigma))$.

 }
\end{theorem}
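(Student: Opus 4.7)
The plan is to split the argument into two parts: first establish very ampleness of $K_X\cdot\sigma$ on a general tetragonal curve, and then deduce projective normality via a standard Castelnuovo--Mumford regularity argument. For the very ampleness, I would proceed by contradiction: if $K_X\cdot\sigma$ fails to separate some pair of (possibly infinitely near) points $x,y\in X$, then $h^0(K_X\cdot\sigma(-x-y))=h^0(K_X\cdot\sigma)-1$. Using the base point free pencil trick with the unique $g^1_4$ on $X$ (and an analysis in the same spirit as the proof of Theorem~\ref{thm1}), this forces the existence of two further points $z,w\in X$ such that
\[
x+y\not\sim z+w,\qquad 2x+2y\sim 2z+2w\in g^1_4 .
\]
The existence of such a ``tangential'' configuration in the $g^1_4$ is excluded for a general tetragonal curve by the result of \cite{D}, giving a contradiction and hence very ampleness.

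Once very ampleness is in hand, the Prym-canonical model $X_\sigma\subset\mathbf{P}(H^0(K_X\cdot\sigma))$ is a smooth nondegenerate curve, and projective normality is equivalent to $n$-normality of $X_\sigma$ for every $n\geq 2$. The plan is to upgrade this to showing that $X_\sigma$ is $3$-regular in the sense of Castelnuovo--Mumford; by the criterion of \cite{G-L-P} this will simultaneously give $2$-normality and imply the higher normalities (since $\mathcal{O}_{X_\sigma}(n-2)$ is nonspecial for $n\geq 3$ because $\deg K_X\cdot\sigma=2g-2$ already forces $\mathcal{O}_{X_\sigma}(1)$ to be nonspecial).

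From the ideal sheaf sequence
\[
0\rightarrow\mathcal{I}_{X_\sigma}\rightarrow\mathcal{O}_{\mathbf{P}(H^0(K_X\cdot\sigma))}\rightarrow\mathcal{O}_{X_\sigma}\rightarrow 0,
\]
the $3$-regularity of $\mathcal{I}_{X_\sigma}$ reduces to $2$-regularity of $\mathcal{O}_{X_\sigma}$. The vanishings $H^i(X_\sigma,\mathcal{O}_{X_\sigma}(2-i))=0$ for $i\geq 2$ are immediate from Grothendieck vanishing on a $1$-dimensional scheme, so the only nontrivial step is $H^1(X_\sigma,\mathcal{O}_{X_\sigma}(1))=0$. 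Identifying $\mathcal{O}_{X_\sigma}(1)\cong K_X\cdot\sigma$ and applying Serre duality,
\[
H^1(X_\sigma,\mathcal{O}_{X_\sigma}(1))\cong H^0(X,K_X-K_X\cdot\sigma)^\vee=H^0(X,\sigma^{-1})^\vee,
\]
which vanishes because $\sigma$ is a nontrivial degree-zero line bundle.

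The main obstacle is clearly the first step: controlling the locus of tetragonal curves on which $K_X\cdot\sigma$ fails to be very ample, and translating such a failure into the forbidden configuration $2x+2y\sim 2z+2w$ in the $g^1_4$. The Castelnuovo--Mumford part is then essentially mechanical, as all the needed vanishings follow from Serre duality and the nontriviality of $\sigma$. Once both parts are combined, Theorem~\ref{lem1} follows.
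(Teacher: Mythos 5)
Your proposal follows essentially the same route as the paper: very ampleness of $K_X\cdot\sigma$ on a general tetragonal curve is reduced to excluding the configuration $2x+2y\sim 2z+2w\in g^1_4$ with $x+y\not\sim z+w$, which is ruled out by \cite{D}, and projective normality is then obtained from $3$-regularity via \cite{G-L-P}, the ideal sheaf sequence, Grothendieck vanishing, and the Serre duality computation $H^1(X_\sigma,\mathcal{O}_{X_\sigma}(1))\cong H^0(X,\sigma)^\vee=0$. This matches the paper's argument step for step, so no further comparison is needed.
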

\section{Projectivized Tangent Cone Quadrics of $C$ generate the Space of $H^{0}(\mathcal{I}_{C_{\eta}}(2))$}

Everywhere in this paper by $C$ we mean the tetragonal curve
obtained in Theorem \ref{thm1}. Moreover by $\eta$ we mean the
$2$-torsion line bundle obtained there in the rest of the paper.
\begin{theorem}{\label{thm5}
 Let $C$ and $\eta$ be the curve and the line bundle obtained in Theorem
 \ref{thm1}.
 Assume moreover that the double covering induced by $\eta$ is an
$\acute{e}$tale. Then a quadric $Q\in\mathcal{I}_{C_{\eta}}(2)$ of
rank equal or less than $4$ is a prjectivized tangent cone if and
only if one of its rulings cuts a complete $g^{1}_{d}$ with
$d\in\{g-3, g-2, g-1 \}$ and
$2\mathcal{O}(g^{1}_{d})\otimes\mathcal{L} =K_{C}$, for some line
bundle $\mathcal{L}$ on $C$ which is of degree $0$, $2$ or $4$.

}
\end{theorem}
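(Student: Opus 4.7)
The plan is to reduce the bidirectional statement to Theorem \ref{theorem3} and Proposition \ref{prepro1}, and then to treat by hand the one case that neither result covers. Since $C$ is a smooth non-hyperelliptic tetragonal curve of genus $g = 7$, the forward implication (``projectivized tangent cone $\Rightarrow$ complete $g^{1}_{d}$ with the stated property'') is immediate from Theorem \ref{theorem3}: the three possible values $d \in \{4,5,6\}$ correspond respectively to $\deg\mathcal{L} \in \{4,2,0\}$, and the relevant $g^{1}_{d}$ is complete in each case thanks to the final assertion of \ref{theorem3} (using $g=7$ precisely to upgrade completeness in the $d = g-3$ case). For the converse, the cases $\deg\mathcal{L} = 0$ and $\deg\mathcal{L} = 2$ are exactly what Proposition \ref{prepro1} establishes.

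\textbf{The new case.} Thus the only remaining work is the backward direction when $d = g-3 = 4$ and $\deg\mathcal{L} = 4$. I would mimic the construction from \ref{prepro1}: writing $\mathcal{L} = \mathcal{O}_{C}(p_{1}+\cdots+p_{4})$, pick a half-pullback $\tfrac{1}{2}\pi_{\eta}^{*}\mathcal{L} = \mathcal{O}_{\tilde{C}}(\tilde{D}_{1})$ with $\tilde{D}_{1}$ of degree $4$ and no conjugate pair in its support, and set
\[
\tilde{\mathcal{L}} := \pi_{\eta}^{*}(\mathcal{O}(g^{1}_{4})) \otimes \tfrac{1}{2}\pi_{\eta}^{*}\mathcal{L}.
\]
Then $\deg(\tilde{\mathcal{L}}) = 8 + 4 = 12 = 2g-2$, and a direct norm computation gives $\Nm(\tilde{\mathcal{L}}) = 2\mathcal{O}(g^{1}_{4}) \otimes \mathcal{L} = K_{C}$, so $\tilde{\mathcal{L}} \in \Ker(\Nm) = \mathbb{P}(\pi_{\eta}) \cup Z_{1}$.

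\textbf{The key estimate $h^{0}(\tilde{\mathcal{L}}) = 4$.} To conclude that $\tilde{\mathcal{L}}$ is a stable singularity of $\mathbb{E}(\pi_{\eta})$ I must show $h^{0}(\tilde{\mathcal{L}}) = 4$. I would run a linear-span argument modelled on the $d = g-2$ step of \ref{prepro1}, but now with \emph{two} pairs of conjugate points contributed by $\tfrac{1}{2}\pi_{\eta}^{*}\mathcal{L}$. Take a global section of $\tilde{\mathcal{L}}$ whose associated divisor has the form $\bar{D} = \tilde{M} + 2p_{1} + q_{1} + 2p_{2} + q_{2}$, with $p_{i}, q_{i}$ conjugate under $\pi_{\eta}$ and $\tilde{M} + p_{1} + p_{2}$ arising from a divisor in $|\pi_{\eta}^{*}(\mathcal{O}(g^{1}_{4}))|$. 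Using Lemma \ref{l} and adding one $q_{i}$ at a time, I would show that each $q_{i}$ is not already contained in the span of the divisor built so far, giving
\[
\dim\langle\bar{D}\rangle = \dim\langle\tilde{M}+p_{1}+p_{2}\rangle + 2 = 2g-6 = 8
\]
in $\mathbf{P}(H^{0}(K_{\tilde{C}}))$. Geometric Riemann--Roch on $\tilde{C}$ then yields $h^{0}(\bar{D}) = 12 - 8 = 4$. Even parity places $\tilde{\mathcal{L}} \in \mathbb{P}(\pi_{\eta})$ rather than $Z_{1}$, and by Theorem \ref{Martin thm} its projectivized tangent cone $Q_{\tilde{\mathcal{L}}}$ is a quadric containing $C_{\eta}$; by construction its ruling cuts the prescribed $g^{1}_{4}$, so $Q_{\tilde{\mathcal{L}}} = Q$.

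\textbf{Main obstacle.} The delicate point is the span computation just indicated. With $\mathcal{L}$ of degree $4$ there are two conjugate pairs feeding into $\bar{D}$ (as opposed to one in \ref{prepro1}), so one must rule out any unexpected collapse of $\dim\langle\bar{D}\rangle$ below $2g-6$. This is exactly where the specific geometry of $C$ from Theorem \ref{thm1} enters: the tetragonal pencils cut by lines through the three collinear nodes of the plane sextic model, together with the explicit residual form $\mathcal{L} = 2(x_{i}+x_{j})$ computed on $C$, provide enough general position of the $p_{i}, q_{i}$ for the inductive ``add-one-point-at-a-time'' step on $\dim\langle\cdot\rangle$ to succeed. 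Once this is in hand, the forward direction and Proposition \ref{prepro1} assemble with the argument above to give the full iff statement.
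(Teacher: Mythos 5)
Your reduction of the forward direction and of the backward direction for $d\in\{g-2,g-1\}$ to Theorem \ref{theorem3} and Proposition \ref{prepro1} is exactly what the paper does. The problem is your treatment of the remaining case $d=g-3=4$. There you propose to imitate Proposition \ref{prepro1}: build $\tilde{\mathcal{L}}=\pi_{\eta}^{*}(\mathcal{O}(g^{1}_{4}))\otimes\tfrac{1}{2}\pi_{\eta}^{*}\mathcal{L}$ and show $h^{0}(\tilde{\mathcal{L}})=4$ by an ``add one conjugate point at a time'' span computation. But you never carry out that computation -- you explicitly defer it to unspecified ``general position'' of the points $p_{i},q_{i}$ -- and this is precisely the step that cannot be made to work as stated, so the argument has a genuine gap rather than a routine omission.

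The paper resolves the $d=g-3$ case in the opposite way: it shows the hypothesis is \emph{vacuous} for this curve. By Martens--Mumford the only $g^{1}_{4}$'s on $C$ are the three pencils of lines through the collinear nodes, and for a divisor $D\in\mid H-(x_{1}+x_{2})\mid$ one computes $\dim\langle D\rangle = 5-h^{0}(K_{C}\cdot\eta-D)\leq 5-h^{0}(D_{12}-H)=2$, using the Clifford-theorem argument that pins down $h^{0}(D_{12}-H)=3$. Since a quadric of rank $\leq 4$ in $\mathbf{P}^{5}=\mathbf{P}(H^{0}(K_{C}\cdot\eta))$ has rulings that are $\mathbf{P}^{3}$'s sweeping out the whole fourfold $Q$, a family of planes parametrized by a pencil cannot be a system of rulings of $Q$; hence no ruling of $Q$ ever cuts a $g^{1}_{4}$, and the implication holds trivially. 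This same dimension count defeats your construction at its last step: even if you established $h^{0}(\tilde{\mathcal{L}})=4$, the identification $Q_{\tilde{\mathcal{L}}}=Q$ would require the spans $\langle D\rangle$, $D\in\mid g^{1}_{4}\mid$, to sweep out $Q$, which they cannot since their union has dimension at most $3$. So the missing idea is not a finer general-position analysis of the conjugate pairs, but the observation that on this particular $C$ the degree-$4$ case simply does not occur.
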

\begin{proof}{
If a quadric $Q\in\mathcal{I}_{C_{\eta}}(2)$ is a projectivized
tangent cone of $\mathbb{E}(\pi_{\eta})$, then
 since $g(C)=7$ one of its rulings cuts the desired complete linear series,  by Theorem \ref{theorem3}.

\noindent Conversely if one of the rulings of a quadric
$Q\in\mathcal{I}_{C_{\eta}}(2)$ of rank equal or less than $4$ cuts
a complete $g^{1}_{d}$ with prescribed conditions, then Proposition
\ref{prepro1} implies that $Q$ is a projectivized tangent cone of
$\mathbb{E}(\pi_{\eta})$ provided that $d\in \{g-2, g-1 \}$. If
$d=g-3$, then one obtains a $g^{1}_{4}$ on $C$. By Martens-Mumford's
theorem there are only finitely many $g^{1}_{4}$'s on $C$. In fact
the pencils of lines through $\bar{x}$ through $\bar{y}$ or through
$\bar{z}$ cut out three $g^{1}_{4}$'s on $C$ and one can see that a
$g^{1}_{4}$ on $C$ is one of these pencils. But it is easy to see
that the rulings generated by divisors in these $g^{1}_{4}$'s are at
most of dimension $2$ and therefore these rulings can not sweep out
a quadric. In fact for $D\in \mid H-(x_{1}+x_{2})\mid$ one has:
$$\begin{array}{ccc}
\dim(<D>)&=&\!\!\!\!\!\!\!\!\!\!\!\!\!\!\!\!\!\!\!\!\!\!\!\!\!\!\!\!\!\!\!\!\!\!\!\!\!\!\!\!\!\!\!\!\!\!\!\!
\!\!\!\!\!\!\!\!\!\!\!\!\!\!\!\!\!\!\!\!\!\!\!\!5-h^{0}(K_{C}\cdot\eta-g^{1}_{4})\\
&=&5-h^{0}(3H-\Delta+2H-D_{12}-(H-(x_{1}+x_{2})))\\
&=& 5-h^{0}(D_{12}-H+x_{1}+x_{2}) \leq 5-h^{0}(D_{12}-H).
\end{array}$$

\noindent Since $C$ is non-hyperelliptic, the Clifford's theorem
asserts that $h^{0}(D_{12}-H)\leq 3$. Consider now that multiplying
by $H$ gives the following exact sequence:
$$0\rightarrow H^{0}(D_{12}-H)\rightarrow H^{0}(D_{12})\rightarrow H^{0}(H)$$
which implies that $h^{0}(D_{12})-h^{0}(D_{12}-H)\leq h^{0}(H)=3$.
Therefore one has
 $h^{0}(D_{12}-H)\geq 3$. Summarizing one has $h^{0}(D_{12}-H)=3$
 and so $\dim(<D>)\leq 5-h^{0}(D_{12}-H)=2$.\\
 These imply that $d$ can not be equal to $g-3=4$ because the linear spaces
  inside $Q$ generated by divisors in $g^{1}_{d}$
 have to sweep out the quadric itself.
}
\end{proof}
Now, in order to give an application of Theorem \ref{thm5}, we
describe $W^{1}_{5}$ and $W^{1}_{6}$ on $C$:

\begin{exam}\label{example1}{
\textbf{(i) $g^{1}_{5}$'s on $C$:}

 By Mumford-Martens theorem and considering that $C$ is not hyperelliptic nor trigonal and nor a smooth plane
 quintic,
 one has $\dim (W^{1}_{5})\leq 5-2-2=1$.
For each $p\in X-\{\bar{x}, \bar{y}, \bar{z}\}$, the lines
 passing through $p$ cut out a pencil of degree $5$ on $X$ as well as do
 the quadrics through $\bar{x}, \bar{y}, \bar{z}$ and $p$. These pencils give rise to pencils of the same kind on
 $C$ via pulling them back to $C$ by the normalization map, $i$. Consider moreover that the only way for a one dimensional sub vector space $V$, of
 quadrics in $\mathbf{P}^2$ to cut a $g^{1}_{5}$ on $X$ is that each member of $V$ has to pass
 through $\bar{x}, \bar{y}, \bar{z}$ and $p$.
 The cubiques in $\mathbf{P}^2$ can not cut a
 $g^{1}_{5}$ on $X$. Generally picking $6d-11$ points $p_{1}, p_{2},...,p_{6d-11}$ fixed on
 $X$, the hypersurfaces of degree $d$ in
 $\mathbf{P}^2$ passing through $\bar{x}, \bar{y}, \bar{z}$ and the chosen  $6d-11$ points $p_{1}, p_{2},...,p_{6d-11}$ will cut
 a $g^{r}_{5}$ on $X$. For $d\geq 4$ we have $r\geq 2$ and therefore
hypersurfaces of degree $d\geq 4$ won't cut a $g^{1}_{5}$. Therefore
$\dim (W^{1}_{5})=1$ and  $g^{1}_{5}$'s are cut on $X$ by lines or
quadrics of $\mathbf{P}^2$. Now these pencils pulled back via $i$,
are the only $g^{1}_{5}$'s on $C$.\\

\noindent If $g^{1}_{5}=H-p$ for some $p\in C$ then $K_{C}\cdot\eta
- g^{1}_{5}=D_{12}-H+p$. Consider that $h^{0}(D_{12})=6$ and by
proof of Proposition \ref{prepro1}, one has $h^{0}(D_{12}-H)=3$.
 Therefore $h^{0}(D_{12}-H-p)=3$ if $p$ belongs to
the base locus of $\mid D_{12}-H \mid$, and $h^{0}(D_{12}-H-p)=2$
otherwise. Taking the exact sequence
$$0\rightarrow H^{0}(D_{12}-H-p)\rightarrow H^{0}(D_{12}-H)\rightarrow H^{0}(H)\rightarrow 0$$
it is routine to see that $h^{0}(K_{C}\cdot\eta - g^{1}_{5})=3$ if
 $p$
belongs to the base locus of $\mid D_{12}-H \mid$ and
$h^{0}(K_{C}\cdot\eta - g^{1}_{5})=2$ otherwise. These imply that
for each divisor $D\in \mid g^{1}_{5} \mid$ one has $\dim(<D>)=2$ if
$p$ belongs to the base locus of $\mid D_{12}-H \mid$, and
$\dim(<D>)=3$ otherwise. Moreover consider that since by
$3H-\Delta\sim 2H$, one has
$K_{C}=2g^{1}_{5}\otimes\mathcal{O}(2p)$. Now Proposition
\ref{prepro1} implies that the ruled hypersurface $\cup
_{\acute{D}\in g^{1}_{5}}<\acute{D}>$ is a prym projectivized
tangent cone provided that  $p$ does not belong to the base locus of
$\mid D_{12}-H \mid$.

\noindent In the case $g^{1}_{5}=2H-2\bar{x}-2\bar{y}-2\bar{z}-p$ we
have $K_{C}=2g^{1}_{5}\otimes\mathcal{O}(2p)$ and
$$\begin{array}{ccc}
K_{C}\cdot\eta-g^{1}_{5}&=&\!\!\!\!3H-\Delta
+2H-D_{12}-(2H-2\bar{x}-2\bar{y}-2\bar{z}-p)\\
&=&\!\!\!\!\!\!\!\!\!\!\!\!\!\!\!\!\!\!\!\!\!\!\!\!\!\!\!\!\!\!\!\!\!\!\!\!\!\!4H-\Delta
-D_{12}+p \sim D_{12}-H+p.
\end{array}
$$
Therefore the situation is the same as in the case $g^{1}_{5}=H-p$.

\noindent \textbf{(ii) $g^{1}_{6}$'s on $C$:} Again by
Mumford-Martens theorem and considering that $C$ is not
hyperelliptic nor trigonal and nor a smooth plane
 quintic,
one has $\dim (W^{1}_{6})\leq 6-2-2=2$. The lines in $\mathbf{P}^2$ cut a $g^{2}_{6}$ on
 $C$.
 For each $p, q\in X-\{\bar{x}, \bar{y}, \bar{z}\}$
 the quadrics through $p, q$ and through two of the points $\bar{x}, \bar{y}, \bar{z}$ cut a $g^{1}_{6}$ on
 $X$. Again these pencils are pulled back to $g^{1}_{6}$'s on $C$
via the normalization map.\\
Now similarly as in \textbf{(i)}, the only way for a one dimensional sub-vector space $V$, of
 quadrics in $\mathbf{P}^2$ to cut a $g^{1}_{6}$ on $X$ is that each member of $V$ has to pass
 through two points $p, q$ and through two of the points $\bar{x}, \bar{y}, \bar{z}$. A
 computation similar for $g^{1}_{5}$'s case shows that these are the
 only $g^{1}_{6}$'s on $X$.
 Any $g^{1}_{6}$ on $C$ will be obtained
by pulling back a $g^{1}_{6}$ on $X$ via $i$.
If $g^{1}_{6}=2H-2\bar{x}-2\bar{y}-p-q$ then
$$\begin{array}{ccc}
K_{C}\cdot\eta-g^{1}_{6}&=&\!\!\!\!\!\!\!\!\!\!\!\!\!\!\!\!3H-\Delta+2H-D_{12}-(2H-2\bar{x}-2\bar{y}-p-q)\\
&\sim & D_{12}-2H+2\bar{x}+2\bar{y}+p+q\sim D_{12}-H+p+q-2\bar{z}.
\end{array}
$$
Therefore one has
$$\begin{array}{ccc}
h^{0}(K_{C}\cdot\eta-g^{1}_{6})&=&\!\!\!\!\!\!\!\!\!\!\!\!\!\!\!\!\!\!\!\!\!\!\!\!\!\!\!\!\!\!\!\!\!\!\!\!\!\!\!\!\!\!\!
h^{0}(D_{12}-H+p+q-2\bar{z})\\
&=&h^{0}(D_{12}-H-2\bar{z})= h^{0}(D_{12}-H)-2=1.
\end{array}$$
where the last equality is valid because
$2\bar{z}$ is not contained in the base locus of $\mid D_{12}-H
\mid$. These computations imply that for each $D\in \mid
g^{1}_{6}\mid$ one has $\dim(<D>)=4$ and the union of linear spaces
$<D>\subset\mathbf{P}^5$, when $D$ varies in $g^{1}_{6}$, fill up
all the space $\mathbf{P}^{5}$ and therefore the line bundle
$g^{1}_{6}$ can not give a prym projectivized tangent cone.

\noindent \textbf{(iii) $g^{2}_{5}, g^{2}_{6}$'s on $C$:} The curve
 $C$ can't admit any $g^{2}_{5}$ and $\dim(W^{2}_{6})\leq 0$. Off
course the lines in $\mathbf{P}^2$ cut a $g^{2}_{6}$ on $X$. The
 quadrics passing through the points $\bar{x}, \bar{y}$ and $\bar{z}$ cut a
$g^{2}_{6}$. This is nothing but the $g^{2}_{6}$ cut by the lines
 in $\mathbf{P}^2$.  Again any $g^{2}_{6}$ on $C$ will be obtained
 by pulling back a $g^{2}_{6}$ on $X$ via $i$, the normalization map.
 }
\end{exam}
\noindent As a byproduct of the computations just have been done,
Theorem \ref{theorem3} and Proposition \ref{prepro1}, one has the
following:
\begin{theorem}{
The space of quadrics containing the $K_{C}\cdot\eta$-model of
$C$ is generated by Prym projectivized tangent cones at double
points of $\mathbb{E}(\pi_{\eta})$. Precisely the set of quadrics
$$\{Q_{g^{1}_{5}}\mid g^{1}_{5}\in W^{1}_{5}\}$$
which consists of a subset of projectivized tangent quadrics of rank equal or less than $4$, 
generate the space of quadrics
containing $C_{\eta}$ in $\mathbf{P}^{5}$.}
\end{theorem}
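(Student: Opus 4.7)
First I would compute the dimension of $H^{0}(\mathcal{I}_{C_{\eta}}(2))$. By Theorem~\ref{lem1}, $C_{\eta}$ is projectively normal in $\mathbf{P}(H^{0}(K_{C}\cdot\eta))=\mathbf{P}^{5}$, so the restriction map $H^{0}(\mathbf{P}^{5},\mathcal{O}(2))\twoheadrightarrow H^{0}(C_{\eta},\mathcal{O}_{C_{\eta}}(2))$ is surjective. Since the embedding is via $K_{C}\cdot\eta$ and $\eta^{2}\cong\mathcal{O}_{C}$, one has $\mathcal{O}_{C_{\eta}}(2)\cong 2K_{C}$, and Riemann--Roch (using $\deg(2K_{C})=24>2g-2$) gives $h^{0}(2K_{C})=3g-3=18$ for $g=7$. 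Therefore
$$h^{0}(\mathcal{I}_{C_{\eta}}(2))=\binom{7}{2}-18=3.$$

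Next I would identify the family $\{Q_{g^{1}_{5}}\}_{g^{1}_{5}\in W^{1}_{5}}$ inside this three-dimensional space. By Example~\ref{example1}(i), $W^{1}_{5}$ is irreducible of dimension one, and a general member can be written as $g^{1}_{5}=|H-p|$ with $p$ off the base locus of $|D_{12}-H|$; for such $p$, the relation $K_{C}=2\,\mathcal{O}(g^{1}_{5})\otimes\mathcal{O}(2p)$ holds. Applying Proposition~\ref{prepro1} with $d=g-2=5$ and $\mathcal{L}=\mathcal{O}(2p)$ then shows that $Q_{g^{1}_{5}}$ is a projectivized tangent cone of $\mathbb{E}(\pi_{\eta})$ at a stable singularity with $h^{0}(\tilde{\mathcal{L}})=4$, hence a rank-four quadric containing $C_{\eta}$. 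Example~\ref{example1}(ii)--(iii) together with Theorem~\ref{thm5} rule out the cases $d=g-1=6$ and $d=g-3=4$ as sources of rank $\leq 4$ tangent cones. Thus $\{Q_{g^{1}_{5}}\}_{g^{1}_{5}\in W^{1}_{5}}$ is a one-parameter family of rank-four Prym tangent cones inside $H^{0}(\mathcal{I}_{C_{\eta}}(2))$.

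It remains to show that this one-parameter family spans $H^{0}(\mathcal{I}_{C_{\eta}}(2))$. Consider the rational classifying map
$$\Phi\colon W^{1}_{5}\longrightarrow\mathbf{P}(H^{0}(\mathcal{I}_{C_{\eta}}(2)))\cong\mathbf{P}^{2},\quad g^{1}_{5}\mapsto [Q_{g^{1}_{5}}];$$
spanning is equivalent to the image of $\Phi$ not being contained in any line of $\mathbf{P}^{2}$. I would pick three general points $p_{1},p_{2},p_{3}\in C$ and prove directly that the rank-four quadrics $Q_{i}:=Q_{|H-p_{i}|}$ are linearly independent in $H^{0}(\mathcal{I}_{C_{\eta}}(2))$. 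Each $Q_{i}$ is swept out by the $\mathbf{P}^{3}$'s $\langle D\rangle$ as $D$ ranges over $|H-p_{i}|$, and its vertex is the line common to those $\mathbf{P}^{3}$'s. Given a putative relation $a_{1}Q_{1}+a_{2}Q_{2}+a_{3}Q_{3}=0$ and a generic $D\in|H-p_{1}|$, restriction to $\langle D\rangle\cong\mathbf{P}^{3}$ yields $a_{2}\,Q_{2}|_{\langle D\rangle}+a_{3}\,Q_{3}|_{\langle D\rangle}=0$ (because $Q_{1}$ vanishes identically on $\langle D\rangle$); producing a single $D$ for which $Q_{2}|_{\langle D\rangle}$ and $Q_{3}|_{\langle D\rangle}$ are non-proportional then forces $a_{2}=a_{3}=0$ and hence $a_{1}=0$.

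The main obstacle is precisely this non-proportionality of the restrictions, which is a general-position statement asserting that the three families of $\mathbf{P}^{3}$-rulings of $Q_{1},Q_{2},Q_{3}$ are mutually generic in $\mathbf{P}^{5}$. My approach would be to transfer the condition to the plane sextic $X$ via the normalization $i\colon C\to X$: using the parametrization $W^{1}_{5}\cong C$, $p\mapsto|H-p|$, coming from Example~\ref{example1}(i), a proportionality between $Q_{2}|_{\langle D\rangle}$ and $Q_{3}|_{\langle D\rangle}$ would translate into an unexpected incidence among the pencils of lines through $i(p_{2})$ and $i(p_{3})$ relative to a line in the pencil through $i(p_{1})$, which a dimension count on the space of such incidences shows fails for a generic choice of $(p_{1},p_{2},p_{3})$ on a curve $X$ of the type produced by Theorem~\ref{thm1}.
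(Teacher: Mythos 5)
Your first two steps line up with what the paper uses (mostly implicitly): the projective‑normality computation giving $h^{0}(\mathcal{I}_{C_{\eta}}(2))=21-18=3$, hence $\mathbf{P}(\mathcal{I}_{2}(C))\cong\mathbf{P}^{2}$, comes from Theorem \ref{lem1}, and the identification of each $Q_{g^{1}_{5}}$ as a rank $\leq 4$ projectivized tangent cone via Proposition \ref{prepro1} with $d=g-2=5$ and $\mathcal{L}=\mathcal{O}(2p)$ is exactly Example \ref{example1}(i). One small discrepancy: the paper describes $W^{1}_{5}$ as \emph{two} copies birational to $C$ (the pencils $|H-p|$ and the pencils cut by conics through the three nodes and $p$), not as a single irreducible curve; this is harmless for you since you only use the $|H-p|$ family.

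Where you genuinely diverge is the spanning step, and that is also where your proposal has a gap. Your reduction is sound: a relation $a_{1}Q_{1}+a_{2}Q_{2}+a_{3}Q_{3}=0$ dies once you produce one ruling $\langle D\rangle\subset Q_{1}$ on which $Q_{2}$ and $Q_{3}$ restrict to nonzero, non‑proportional quadratic forms. But that non‑proportionality carries the entire content of the linear‑independence claim, and you only assert that it follows from ``a dimension count on the space of such incidences'' without saying what the incidence space is or why the count works; as written this defers the hard step rather than proving it. The paper sidesteps the issue with a softer argument: the assignment $g^{1}_{5}\mapsto Q_{g^{1}_{5}}$ defines an embedding $\Phi:W^{1}_{5}\rightarrow\mathbf{P}(\mathcal{I}_{2}(C))\cong\mathbf{P}^{2}$ whose source is birational to the genus $7$ curve $C$; if the image were contained in a line it would force $W^{1}_{5}$ to be rational, a contradiction, so the image is non‑degenerate and the quadrics span. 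If you want to keep the explicit three‑point approach you must actually carry out the incidence count; otherwise replace that step by the rationality argument (noting, in fairness, that the paper itself asserts rather than proves that $\Phi$ is an embedding, which is what rules out $\Phi$ mapping $W^{1}_{5}$ onto a line with positive degree).
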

\begin{proof}{
There exists a map $\Phi$ defined by
$$
\begin{array}{ccc}
\Phi: &W^{1}_{5}\rightarrow &\mathbf{P}(\mathcal{I}_{2}(C))\cong \mathbf{P}^{2}\\
\qquad
\Phi(g^{1}_{5})=\!\!\!\!\!\!&Q_{g^{1}_{5}}&\!\!\!\!\!\!\!\!:=\cup_{D\in
g^{1}_{5}}<D>
\end{array}
$$
Consider that $\Phi$ is an embedding and by our computations, its
image is contained in the locus of projectivized tangent cone
quadrics inside $\mathbf{P}(\mathcal{I}_{2}(C))$. Moreover consider
again by example \ref{example1} that $W^{1}_{5}$ consists of two
copies both are birational to $C$ itself. Therefore the image of the
map $\Phi$ is non-degenerate. These imply that the linear span of
projectivized tangent cones is $\mathbf{P}(\mathcal{I}_{2}(C))$. }
\end{proof}
\begin{remark}
Based on Debarre's work in \cite{D1}, the Prym-Torelli map
$$\mathcal{P}:\mathcal{R}_{g}\rightarrow \mathcal{A}_{g-1}$$
is generically injective. This fails in the non-generic locus'
because of well known reasons. In fact Donagi's tetragonal
construction as well as generalized tetragonal construction
introduced in \cite{I-L} imply that, for an etale double covering
$\bar{Y}\rightarrow Y$, of a tetragonal (generalized tetragonal)
curve $Y$, there exist two another un-ramified double coverings
having Prymians isomorphic to that of $\bar{Y}\rightarrow Y$. This
implies non-injectivity of the Prym-Torelli map in the locus of
double coverings of tetragonal (generalized tetragonal) curves in
$\mathcal{R}_{g}$.

On the other hand as it has been noticed by H. Lange and E. Sernesi
for the injectivity of the Torelli map in \cite{L-S}, an effective
strategy to deal with the injectivity of the Prym-Torelli map seems
to consist of two main steps.
The first step is to show that for a
given unramified double cover $\pi:\bar{X}\rightarrow X$,
the
projectivized tangent cones at double points of the Prym-theta
divisor of the Prym variety generate the space of quadrics through
the Prym-canonical model of the double covering.
 This step has been
done by Debarre in \cite{D1} for curves varying in an open subset of
$\mathcal{R}_{g}$, as we already noticed.

The second main step consists in proving that the quadrics through
the Prym-canonical model of $\pi:\bar{X}\rightarrow X$ in the
projective space of the Prym-canonical differential forms, cut the
Prym-canonical model. This step has been also proved not only for
general Prym-canonical curves by Debarre in \cite{D1}, but also for
general tetragonal curves of genus at least $11$ by him in \cite{D}.
Meanwhile, H. Lange and E. Sernesi have done this step for
unramified double coverings of curves $X$ with $\Cliff(X)\geq 3$ in
\cite{L-S}.

Consider now that Debarre's work in \cite{D}, together with
non-injectivity of Prym-Torelli map in tetragonal(generalized
tetragonal) locus, imply that the projectivized tangent cones at
double points of the Prym-theta divisor of the Prym variety of a
general tetragonal curve of genus at least $11$, or that of a
generalized tetragonal curve, do not generate the space of quadrics
through the Prym-canonical model of an un-ramified double coverings
of such a curve.

These however won't give any information about validity or
invalidity of the first step for an arbitrary tetragonal curve of
genus $g\leq11$, as it is concluded from our work that the first
step remains valid for an etale double cover of the curve $C$. This
as well proves that the quadrics through the Prym-canonical model of
an arbitrary etale double covering of the curve $C$, does not cut
its Prym-canonical model.

\end{remark}

\textbf{Acknowledgements:} Professor E. Sernesi read parts of an
early version of this manuscript and suggested some corrections. He
moreover answered my various questions patiently. I am grateful to
him and I express my hearty thanks to him. The curve $C$ in the
paper, has been addressed to me by
 professor A. Verra and professor G. Farkas when I was visiting the Rome Tre University on fall of 2011. 
 I express my deep gratitude for
 professor A. Verra and professor G. Farkas for this and another hints that I received from them.


\begin{thebibliography}{MM}
\bibitem{ACGH}
E. Arbarello, M. Cornalba, Ph. Griffiths, J. Harris; Geometry of
Algebraic Curves. I Grundlehren 267(1985), Springer.

\bibitem{A-M}
A. Andreotti, A. Mayer; On Period relations for Abelian Integrals on
Algebraic Curves. Ann. Scoula Norm. Sup. Pisa 21(1967) 189-238.

\bibitem{A-S}
E. Arbarello, E. Sernesi; Petri's Approach to the study of Ideal
associated to a Special Divisor, Inventions Math. 49, 99-119(1978).

\bibitem{A-H}
E. Arbarello, J. Harris; Canonical Curves and Quadrics of rank 4,
Compositio mathematica. 43 p.145-179(1981).

\bibitem{C-L} Ch. Birkenhake, H. Lange; Complex Abelian Varieties (Seconed Edition).
 I Grundlehren der mathematischen
Wissenschaften, Vol. 302, Springer(2004).

\bibitem{D}
O. Debarre; Sur les vari$\acute{e}$t$\acute{e}$s de Prym des courbes
t$\acute{e}$tragonales, Ann. Scient. $\acute{E}$c. Norm. Sup. 4e
serie, t.21 (1988), 545-559.

\bibitem{D1}
O. Debarre; Sur les probleme de Torelli pour les
vari$\acute{e}$t$\acute{e}$s de Prym, Amer. J. Math. 111(1989),
193-212.

\bibitem{I-L}
E. Izadi, H. Lange; Counter Examples of high Clifford Indices for
Prym-Torelli, J. A. Geometry,

\bibitem{G-L-P}
L. Gruson, R. Lazarsfeld, C. Peskine; On a theorem of Castelnouvo
and the equations defining space curves. Invent. Math. 72(1983)
491-506.

\bibitem{K}
G. Kempf; On the Geometry of a Theorem of Riemann. Ann. of Math.
98(1973) 178-185.

\bibitem{L-S}
H. Lange, E. Sernesi; Quadrics containing a Prym-Canonical Curve, J.
Algebraic Geometry 5(1996), 387-399.

\bibitem{S-C-M}
S.C. Martin; Singularities of the Prym theta divisor, Annales of
Mathematics, Vol. 170, No. 1, (2009), 163-204.

\bibitem{S-V1}
R. Smith, R. Varley; A Riemann Singularities Theorem for Prym Theta
Divisors, with applications, Pacific Journal of Mathematics, volume
201, No. 2, 479-509, December 2001.

\bibitem{S-V2}
R. Smith, R. Varley; The Curve of "Prym Canonical" Gauss Divisors on
a Prym Theta Divisor, Transactions of the A. M. S. V.353, N.12,
pages 4949-4962(2001).
\end{thebibliography}
\end{document}